\newtheorem{theorem}{Theorem}[section]
\newtheorem{lemma}[theorem]{Lemma}
\newtheorem{corollary}[theorem]{Corollary}
\newtheorem{proposition}[theorem]{Proposition}
\newtheorem{remark}[theorem]{Remark}
\newtheorem{definition}[theorem]{Definition}
\numberwithin{equation}{section}
\newcommand{\BC}{{\mathbb C}}\newcommand{\BD}{{\mathbb D}}
\newcommand{\BZ}{{\mathbb Z}}
\newcommand{\cD}{{\mathcal D}}
\newcommand{\cL}{{\mathcal L}}
\newcommand{\cN}{{\mathcal N}}
\newcommand{\cS}{{\mathcal S}}
\newcommand{\cU}{{\mathcal U}}
\newcommand{\cX}{{\mathcal X}}
\newcommand{\cY}{{\mathcal Y}}
\newcommand{\bA}{{\mathbf A}}\newcommand{\bB}{{\mathbf B}}
\newcommand{\bC}{{\mathbf C}}\newcommand{\bD}{{\mathbf D}}
\newcommand{\bW}{{\mathbf W}}
\newcommand{\fH}{{\mathfrak H}}
\newcommand{\fL}{{\mathfrak L}}
\newcommand{\ga}{\gamma}\newcommand{\Ga}{\Gamma}
\newcommand{\de}{\delta}
\newcommand{\Si}{\Sigma}
\newcommand{\im}{\textup{Im\,}}
\newcommand{\kr}{\textup{Ker\,}}
\newcommand{\spec}{r_\textup{spec}}
\newcommand{\mat}[2]{\ensuremath{\left[\begin{array}{#1}#2\end{array}\right]}}
\newcommand{\sbm}[1]{\left[\begin{smallmatrix}#1\end{smallmatrix}\right]}
\newcommand{\tu}[1]{\textup{#1}}
\newcommand{\half}{\frac{1}{2}}
\newcommand{\ands}{\quad\mbox{and}\quad}
\newcommand{\wtil}{\widetilde}
\newcommand{\Obs}{\operatorname{Obs}\,}
\newcommand{\Rea}{\operatorname{Rea}\,}
\newcommand{\bu}{{\mathbf u}}
\newcommand{\bx}{{\mathbf x}}
\newcommand{\by}{{\mathbf y}}
\newcommand{\frakL}{{\mathfrak L}}
\begin{document}

\title[Infinite Dimensional Bounded Real Lemma I]{Standard versus Strict Bounded Real Lemma
with infinite-dimensional state space I: \\ The State-Space-Similarity Approach}

\author{J.A. Ball}
\address{J.A. Ball, Department of Mathematics, Virginia Tech,
Blacksburg, VA 24061-0123, USA}
\email{joball@math.vt.edu}

\author{G.J. Groenewald}
\address{G.J. Groenewald, Department of Mathematics, Unit for BMI,
North-West University, Potchefstroom 2531, South Africa}
\email{Gilbert.Groenewald@nwu.ac.za}

\author{S. ter Horst}
\address{S. ter Horst, Department of Mathematics, Unit for BMI,
North-West University, Potchefstroom 2531, South Africa}
\email{Sanne.TerHorst@nwu.ac.za}

\thanks{This work is based on the research supported in part by the National
Research Foundation of South Africa (Grant Numbers 93039, 90670, and 93406).}

\begin{abstract}
The Bounded Real Lemma, i.e., the state-space linear matrix inequality characterization
(referred to as Kalman-Yakubovich-Popov or KYP inequality) of when an input/state/output
linear system satisfies a dissipation inequality, has recently been studied for infinite-dimensional
discrete-time systems in a number of different settings: with or without stability assumptions,
with or without controllability/observability assumptions, with or without strict inequalities.
In these various settings, sometimes unbounded solutions of the KYP inequality are required
while in other instances bounded solutions suffice.
In a series of reports we show how these diverse results can be reconciled and unified.
This first instalment focusses on the state-space-similarity approach to the bounded real lemma.
We shall show how these results can be seen as corollaries of a new State-Space-Similarity theorem
for infinite-dimensional linear systems.
\end{abstract}

\subjclass[2010]{Primary 47A63; Secondary 47A48, 93B20, 93C55, 47A56}

\keywords{KYP inequality, State-Space-Similarity theorem, bounded real lemma, infinite
dimensional linear system, minimal system}

\maketitle


\section{Introduction}\label{S:intro}
We consider the discrete-time linear system
\begin{equation}\label{dtsystem}
\Si:=\left\{
\begin{array}{ccc}
\bx(n+1)&=&A \bx(n)+B \bu(n),\\
\by(n)&=&C \bx(n)+D \bu(n),
\end{array}
\right. \qquad (n\in\BZ)
\end{equation}
where $A:\cX\to\cX$, $B:\cU\to\cX$, $C:\cX\to\cY$ and $D:\cU\to\cY$
are bounded linear Hilbert space operators, i.e., $\cX$, $\cU$ and
$\cY$ are Hilbert spaces and the {\em system matrix} associated with
$\Si$ takes the form
\begin{equation}\label{sysmat}
M=\mat{cc}{A&B\\ C& D}:\mat{cc}{\cX\\ \cU}\to\mat{c}{\cX\\ \cY}.
\end{equation}
We refer to the pair $(C,A)$ as an {\em output pair} and to the pair
$(A,B)$ as an {\em input pair}. In this case input sequences
$\bu=(\bu(n))_{n\in\BZ}$, with $\bu(n)\in\cU$, are mapped to output
sequences $\by=(\by(n))_{n\in\BZ}$, with $\by(n)\in\cY$, through the
state sequence $\bx=(\bx(n))_{n\in\BZ}$, with $\bx(n)\in \cX$. With
the system $\Si$ we associate the {\em transfer function} given by
\begin{equation}\label{trans}
F_\Si(z)=D+zC(I-zA)^{-1}B.
\end{equation}
Since $A$ is bounded, $F_\Si$ is defined and analytic on a
neighborhood of $0$ in $\BC$. We shall be interested in the case
where $F_\Si$ admits an analytic continuation to the open unit disk
$\BD$ such that the supremum norm $\|F_\Si\|_\infty$ of $F_\Si$ over
$\BD$ is at most one, i.e., $F_\Si$ has analytic continuation to a
function in the Schur class
\[
  \cS(\cU, \cY) = \left\{ F \colon {\mathbb D}
  \underset{\text{holo}}\mapsto \cL(\cU, \cY) \colon \| F(z) \| \le 1
  \text{ for all } z \in {\mathbb D}\right\}.
\]

A well-known sufficient condition for this to be the case is that the
system matrix $M$ be contractive. We review the elementary argument.
Note first that $\left\| \sbm{ A & B \\ C & D } \right\| \le 1$
implies that $\|A \| \le 1$ and hence $|z| < 1$ implies that $\| z A
\| < 1$. Therefore $I - zA$ is boundedly invertible, and hence the
transfer function $F_{\Sigma}$ is well-defined and analytic on the
open unit disk ${\mathbb D}$.  For any $u \in \cU$ and $z \in
{\mathbb D}$ we have the identity
\begin{equation}\label{syseq1}
\begin{bmatrix} A & B \\ C & D \end{bmatrix}  \begin{bmatrix} z(I-
zA)^{-1} B u \\ u \end{bmatrix}
 = \begin{bmatrix}  (I - zA)^{-1} B u \\ F_{\Sigma}(z) u
\end{bmatrix}.
\end{equation}
For simplicity let us set $x = z(I - zA)^{-1} B u$ and $x' = (I -
zA)^{-1} B u$, so that we can rewrite \eqref{syseq1} as the feedback
system
\begin{equation}\label{syseq2}
\begin{aligned}
\begin{bmatrix} A & B \\ C & D \end{bmatrix}
\begin{bmatrix} x \\ u \end{bmatrix} &  =  \begin{bmatrix} x' \\
    F_{\Si}(z) u \end{bmatrix}, \\  x & =  z x'.
\end{aligned}
\end{equation}
The fact that $\| \sbm{ A & B \\ C & D} \| \le 1$ now implies that
 \[
 \|  x' \|^{2} + \| F_{\Sigma}(z) u \|^{2} \le \| x \|^{2} + \| u
\|^{2}.
\]
Rewrite this and use that $\|x\|^{2} = |z|^{2} \| x'\|^{2} \le  \|
x'\|^{2}$ to get
\[
  \| F_{\Sigma}(z) u \|^{2} \le \| x \|^{2} - \| x'\|^{2} + \|u
  \|^{2} \le \| u \|^{2}.
\]
Since $u \in \cU$ and $z \in {\mathbb D}$ were chosen arbitrarily, we
can conclude that  $\|F_{\Sigma}(z) \| \le 1$ for all $z\in\BD$,
i.e., $F_\Si$ is in the Schur class $\cS(\cU, \cY)$.
For a circuit-theoretic perspective on this argument, we refer to the paper of Helton-Zemanian
\cite{HZ}.

The same argument goes through if we suppose that the system matrix
$M$ is contractive when some other equivalent norm
\begin{equation}   \label{Hnorm}
\|x\|_H = \langle Hx, x \rangle^{\half}, \mbox{ with $H$ strictly positive-definite on $\cX$},
\end{equation}
is used on the state space.
Here we use the conventions: given a selfadjoint operator $H$ on a Hilbert space $\cX$, we say:
\begin{enumerate}
\item[(1)] $H$ is {\em strictly positive-definite} (written $H \succ 0$) if there is a $\delta > 0$ so that
$\langle H x , x \rangle \ge \delta \| x \|^2$ for all $x \in \cX$,
\item[(2)] $H$ is {\em positive-definite} if $\langle H x, x \rangle > 0$ for all nonzero $x \in \cX$, and
\item[(3)] $H$ is {\em positive-semidefinite} if $\langle H x, x \rangle \ge 0$ for all $x \in \cX$.
\end{enumerate}
Note that there is no distinction between {\em strictly positive-definite} and {\em positive-defi\-nite} if $\cX$ is finite-dimensional.
The condition that the system matrix $M = \sbm{ A & B \\ C & D}$ is contractive with the $H$-norm \eqref{Hnorm} used on the state space
translates to:  {\em there exists a bounded strictly positive-definite operator $H$ on $\cX$ so that the {\em
Kalman-Yakubovich-Popov inequality} holds:}
\begin{equation}  \label{KYP1}
\begin{bmatrix} A & B \\ C & D \end{bmatrix}^{*}
\begin{bmatrix} H &  0 \\ 0 & I_{\cY} \end{bmatrix}
\begin{bmatrix} A & B \\ C & D\end{bmatrix}
\preceq \begin{bmatrix} H & 0 \\ 0 & I_{\cU}\end{bmatrix}.
\end{equation}
Indeed, in this case $F_\Si$ is also the transfer function of the
system $\wtil{\Si}$ with contractive system matrix $\wtil{M}$
obtained after a state space similarity with $H^{\half}$, i.e.,
\[
\wtil{M}=\mat{cc}{H^{\half} A H^{-\half} & H^{\half} B\\ C H^{-\half}
&
D}.
\]
Since $A$ is similar to the contraction $H^{\half} A H^{-\half}$, we
have $\spec(A)\leq 1$, so that $F_\Si$ in \eqref{trans} is defined
and analytic on $\BD$. Thus the  KYP inequality \eqref{KYP1} can
have a bounded strictly positive-definite solution $H$ only in case $\spec(A)\leq
1$.

For future reference, note that the KYP inequality \eqref{KYP1} can
be rewritten in spatial form as
\begin{equation} \label{KYP1a}
\left\| \begin{bmatrix} H^{\half} & 0 \\ 0 & I_{\cU}\end{bmatrix}
\begin{bmatrix} x \\ u \end{bmatrix} \right\|^{2}
  - \left\| \begin{bmatrix} H^{\half} & 0 \\ 0 & I_{\cY}\end{bmatrix}
\begin{bmatrix} A & B \\ C & D \end{bmatrix}
\begin{bmatrix} x \\ u \end{bmatrix} \right\|^{2} \ge 0\qquad ( x \in
\cX,\, u \in \cU).
\end{equation}

By a Schur-complement
argument (with the obvious invertibility assumption), the KYP
inequality \eqref{KYP1} can be converted into Riccati form
\[
H - A^{*}HA - C^{*}C - (A^{*}HB + C^{*}D)(I - B^{*}H B -
D^{*}D)^{-1}(B^{*}H A + D^{*}C) \succeq 0
\]
which one can then attempt to solve for $H$ directly.  We do not
pursue this direction and will refer to \eqref{KYP1} as the KYP
inequality for the unknown $H$. See \cite{AKP16} for a recent
treatment of this Riccati
form of the KYP inequality, which also considers the case where
equality occurs.

The Bounded Real Lemma is concerned with the converse question:
{\em  Given a system $\Si$ as in \eqref{dtsystem} with system matrix
$M = \sbm{A & B \\ C & D}$ and transfer function $F_\Si(z) = D + z C
(I - zA)^{-1} B$, defined at least in a neighborhood of $0$, give
explicit conditions in terms of
$M = \sbm{ A & B \\ C & D }$ under which $F_\Si$ has analytic
continuation to a function in the Schur class $\cS(\cU, \cY)$?} We
mention two such versions for the finite-dimensional situation.

\begin{theorem}[Standard Bounded  Real Lemma (see \cite{AV})]
\label{T:BRLfinstan}
Suppose that $\Si$ is a discrete-time linear system as in
\eqref{dtsystem} with $\cX$, $\cU$ and $\cY$ finite dimensional, say
$\cU = {\mathbb C}^{r}$, $\cY = {\mathbb C}^{s}$, $\cX = {\mathbb
C}^{n}$, so that the system matrix $M$ has the form
\begin{equation}\label{findimsys}
M = \begin{bmatrix} A & B \\ C & D \end{bmatrix} \colon
 \begin{bmatrix} {\mathbb C}^{n} \\ {\mathbb C}^{r} \end{bmatrix} \to
     \begin{bmatrix} {\mathbb C}^{n} \\ {\mathbb C}^{s} \end{bmatrix}
\end{equation}
and the transfer function  $F_{\Sigma}$ is equal to a
rational matrix function of size $s \times r$. Assume that the
realization $(A,B,C,D)$ is {\em minimal}, i.e., the output pair
$(C,A)$ is {\em observable} and the input pair $(A,B)$ is {\em
controllable}:
\begin{equation}\label{obscontr}
 \bigcap_{k=0}^{n} \kr C A^{k} = \{0\}\ands
\textup{span}_{k=0,1,\dots, n-1} \im A^{k} B  = \cX = {\mathbb C}^{n}.
\end{equation}
Then $F_{\Sigma}$ is in the Schur class $\cS({\mathbb C}^{r},
{\mathbb C}^{s})$
if and only if
there is a $n \times n$ positive-definite matrix $H$ satisfying the
KYP inequality \eqref{KYP1}.
\end{theorem}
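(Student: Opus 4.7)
The forward implication was essentially established in the discussion preceding the theorem: given $H\succ 0$ satisfying \eqref{KYP1}, the state-space similarity by $H^{\half}$ produces a system $\wtil\Si$ whose system matrix is contractive and whose transfer function coincides with $F_\Si$, so $F_\Si=F_{\wtil\Si}\in\cS(\BC^r,\BC^s)$ by the elementary computation reviewed above. I therefore focus on the converse, assuming $F_\Si\in\cS(\BC^r,\BC^s)$.

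The plan is to manufacture a minimal realization of $F_\Si$ whose system matrix is a contraction, and then transport that contractivity back to $\Si$ through a state-space similarity. First I would invoke the standard realization theorem for the Schur class: every $F\in\cS(\cU,\cY)$ admits a realization $F(z)=\wtil D+z\wtil C(I-z\wtil A)^{-1}\wtil B$ whose Hilbert-space system matrix $\wtil M=\sbm{\wtil A & \wtil B \\ \wtil C & \wtil D}$ is contractive (for example, the de Branges--Rovnyak or Sz.-Nagy--Foias functional model). The state space $\wtil\cX$ may be infinite-dimensional, so the next step is to pass to a minimal compression while preserving contractivity.

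Set $\wtil\cX_{\tu{uo}}=\bigcap_{k\ge 0}\kr\wtil C\wtil A^k$. This closed subspace is $\wtil A$-invariant, so in the orthogonal decomposition $\wtil\cX=\wtil\cX_{\tu{uo}}\oplus(\wtil\cX\ominus\wtil\cX_{\tu{uo}})$ the operator $\wtil A$ is block upper-triangular and $\wtil C$ vanishes on the first summand. A direct block calculation then shows that compressing $\wtil M$ to $(\wtil\cX\ominus\wtil\cX_{\tu{uo}})\oplus\cU$ leaves the transfer function unchanged and yields an observable contractive realization (one non-negative term of the norm inequality is simply discarded). Dualizing the same argument and applying it to the reachable subspace produces a minimal contractive realization $\wtil\Si_{\min}$ of $F_\Si$. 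Since $\Si$ itself is an $n$-dimensional minimal realization of $F_\Si$, the McMillan degree of $F_\Si$ equals $n$, and hence $\wtil\Si_{\min}$ is automatically $n$-dimensional.

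Finally, the classical finite-dimensional State-Space-Similarity theorem, applied to the two minimal realizations $\Si$ and $\wtil\Si_{\min}$, furnishes a unique invertible $T\colon\BC^n\to\wtil\cX_{\min}$ with $TAT^{-1}=\wtil A_{\min}$, $TB=\wtil B_{\min}$, $CT^{-1}=\wtil C_{\min}$, and $D=\wtil D_{\min}$. Setting $H:=T^*T\succ 0$ and translating contractivity of $\wtil M_{\min}$ through the spatial form \eqref{KYP1a} (with the substitution $x\mapsto T^{-1}x$) delivers exactly the KYP inequality \eqref{KYP1} for $(A,B,C,D)$. The main obstacle in this plan is the compression step: one must verify that the successive restrictions to the observable and controllable parts of the possibly infinite-dimensional realization from Stage 1 preserve simultaneously the transfer function and the contractivity of the system matrix. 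Once that is in hand, the finiteness of the McMillan degree reduces the final step to the classical similarity theorem, foreshadowing the infinite-dimensional similarity machinery that the paper sets up to handle the more general bounded real lemmas.
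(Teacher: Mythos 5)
Your proposal is correct and follows essentially the same route as the paper's own sketch of this result: obtain a contractive realization of the Schur-class function, reduce it to a minimal contractive realization, apply the state-space similarity theorem to the two minimal realizations, and set $H=\Gamma^*\Gamma$ (your $T^*T$). The only divergence is in sourcing the contractive realization --- the paper cites the finite-dimensional circuit-theoretic result of \cite{AV} directly, while you start from the possibly infinite-dimensional operator-model realization and recover finite-dimensionality via the McMillan degree, which is precisely the path the paper itself follows when proving the infinite-dimensional Theorem \ref{T:BRLinfstan}.
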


In the strict version of the Bounded Real Lemma, one replaces the
minimality condition with a stability condition to characterize the
{\em strict Schur class} $\cS^{o}(\cU, \cY)$:
\[
  \cS^{o}(\cU, \cY) =\left \{ F \colon {\mathbb D}
\underset{\text{holo}}
  \mapsto \cL(\cU, \cY) \colon \sup_{z \in {\mathbb D}} \| F(z) \|
\le \rho \text{ for some }
  \rho < 1\right\}.
\]
Then we have the following result.

\begin{theorem}[Strict Bounded  Real Lemma (see
\cite{PAJ})]\label{T:BRLfinstrict}
Suppose that the dis\-crete-time linear system $\Si$  is as in
\eqref{dtsystem} with  $\cX$, $\cU$ and $\cY$ finite dimensional, say
$\cU = {\mathbb C}^{r}$, $\cY = {\mathbb C}^{s}$, $\cX = {\mathbb
C}^{n}$, i.e., the system matrix $M$ is as in \eqref{findimsys}.
Assume that $A$ is {\em stable}, i.e., all eigenvalues of $A$ are
inside the open unit disk $\BD$, so that $\spec(A) < 1$ and the
transfer function $F_{\Si}(z)$ is analytic on a neighborhood of
$\overline{\BD}$.  Then
$F_{\Si}(z)$ is in the strict Schur class $\cS^{o}({\mathbb C}^{r},
{\mathbb C}^{s})$ if and only if there is a positive-definite matrix
$H \in {\mathbb C}^{n \times n}$ so that the strict KYP inequality
holds:
\begin{equation} \label{KYP2}
\begin{bmatrix} A & B \\ C & D \end{bmatrix}^{*} \begin{bmatrix} H &
    0 \\ 0 & I_{\cY} \end{bmatrix} \begin{bmatrix} A & B \\ C & D
\end{bmatrix} \prec \begin{bmatrix} H & 0 \\ 0 & I_{\cU}
\end{bmatrix}.
\end{equation}
\end{theorem}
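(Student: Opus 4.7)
For sufficiency, I would make the computation of \eqref{syseq1}--\eqref{syseq2} quantitative. In finite dimensions, strict KYP \eqref{KYP2} is equivalent to the existence of $\epsilon>0$ with $\sbm{H&0\\0&I_\cY}-M^*\sbm{H&0\\0&I_\cY}M\succeq\epsilon I$. Plugging $x=z(I-zA)^{-1}Bu$ and $x'=(I-zA)^{-1}Bu$ into the spatial form of this inequality and using $\|H^\half x\|^2=|z|^2\|H^\half x'\|^2\le\|H^\half x'\|^2$ for $z\in\BD$, the extra $\epsilon$-gap propagates to yield $\|F_\Si(z)u\|^2\le(1-c)\|u\|^2$ uniformly on $\BD$ for some $c>0$, so $F_\Si\in\cS^o(\cU,\cY)$. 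Taking $u=0$ in the same spatial form gives $\|H^\half Ax\|^2<\|H^\half x\|^2$ for every $x\ne 0$, which together with $H\succ0$ forces every eigenvalue $\la$ of $A$ to satisfy $|\la|<1$; so $A$ is stable.

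For necessity my plan is to reduce to Theorem~\ref{T:BRLfinstan} by a two-parameter scaling. Since $A$ is stable, $F_\Si$ extends analytically to $\{|z|<1/\spec(A)\}\supsetneq\overline{\BD}$, so $r\mapsto\sup_{|z|\le r}\|F_\Si(z)\|$ is continuous on a neighborhood of $r=1$ with value $\rho<1$ there. Hence I can choose $r>1$ with $r\cdot\spec(A)<1$ and $\gamma>1$ satisfying $\gamma\sup_{|z|\le r}\|F_\Si(z)\|\le1$. Then $G(z):=\gamma F_\Si(rz)$ lies in $\cS(\cU,\cY)$ and is realized on the same state space $\cX$ by $(rA,rB,\gamma C,\gamma D)$. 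Passing to a minimal realization of $G$, applying Theorem~\ref{T:BRLfinstan} there, and lifting back (the delicate step) produces a positive-definite $H$ on $\cX$ satisfying non-strict KYP for $(rA,rB,\gamma C,\gamma D)$. A direct rearrangement gives, for every $x\in\cX$ and $u\in\cU$,
\[
\|H^\half x\|^2+\|u\|^2-\|H^\half(Ax+Bu)\|^2-\|Cx+Du\|^2 \ge (r^2-1)\|H^\half(Ax+Bu)\|^2+(\gamma^2-1)\|Cx+Du\|^2,
\]
and the right-hand side is strictly positive unless $Ax+Bu=0$ and $Cx+Du=0$. In that latter case the left-hand side reduces to $\|H^\half x\|^2+\|u\|^2$, which is positive for any nonzero $\sbm{x\\u}$ since $H\succ0$. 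Either way the KYP gap is strictly positive at every nonzero $\sbm{x\\u}$, which in finite dimensions is equivalent to strict KYP \eqref{KYP2}.

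The main obstacle I anticipate is the lifting step: transporting the KYP solution from the minimal realization of $G$ back to the (possibly non-minimal) state space $\cX$ without destroying non-strict KYP. In finite dimensions I expect this to be manageable via the Kalman canonical decomposition of $\cX$: each diagonal block of $A$ inherits stability since its spectrum lies inside $\spec(A)$, so Lyapunov equations supply positive-definite extensions on the unreachable and unobservable complements; for a sufficiently large scaling these can be patched with the minimal-part $H$ coming from Theorem~\ref{T:BRLfinstan} into an $H\succ0$ on all of $\cX$ verifying the required non-strict KYP for $(rA,rB,\gamma C,\gamma D)$. This state-space-similarity-style patching is exactly the kind of device that the paper's main State-Space-Similarity theorem is set up to streamline in the infinite-dimensional setting.
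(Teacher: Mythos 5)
Your sufficiency argument is correct: in finite dimensions the strict inequality \eqref{KYP2} gives a uniform gap $\epsilon(\|x\|^2+\|u\|^2)$, and feeding $x=z(I-zA)^{-1}Bu$ into the spatial form exactly as in \eqref{syseq1}--\eqref{syseq2} yields $\|F_\Si(z)u\|^2\le(1-\epsilon)\|u\|^2$ on $\BD$. The paper gets sufficiency differently (replace $B,D$ by $\ga B,\ga D$ for $\ga>1$ close to $1$ and quote the standard lemma), but both routes are fine. For necessity, however, the paper's proof (given for Theorem \ref{T:BRLinfstrict}, which contains this finite-dimensional case) is built precisely to avoid the step you flag as delicate: it augments $F_\Si$ to $F_\epsilon$ as in \eqref{Fepsilon}, realized by $M_\epsilon$ in \eqref{breal} \emph{on the same state space} $\cX$. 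Because $\bB=\begin{bmatrix}B & \epsilon I_{\cX}\end{bmatrix}$ and $\bC^*$ are surjective, the augmented system is exactly minimal on $\cX$ itself, so the standard Bounded Real Lemma hands you $H\succ0$ on $\cX$ directly, and deleting the third row and column of the resulting inequality produces \eqref{KYP2}, with the $\epsilon^2 I$ terms supplying the strictness. No Kalman reduction and no lifting occur.

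Your lifting step is where the proposal has a genuine gap. It is \emph{not} true that a stable non-minimal realization of a Schur-class function admits a non-strict KYP solution, so the claim that patching ``produces a positive-definite $H$ on $\cX$ satisfying non-strict KYP for $(rA,rB,\ga C,\ga D)$'' cannot follow from stability and the Kalman structure alone. Concretely, $G(z)=z$ has the stable, observable, non-controllable realization $A=\sbm{0&0\\0&1/2}$, $B=\sbm{1\\0}$, $C=\begin{bmatrix}1&1\end{bmatrix}$, $D=0$ on $\BC^2$; writing $H=\sbm{a&b\\ \bar b&c}$, the KYP defect is
\[
\begin{bmatrix} a-1 & b-1 & 0\\ \bar b-1 & \tfrac{3c}{4}-1 & -\bar b/2\\ 0 & -b/2 & 1-a\end{bmatrix},
\]
and positive semidefiniteness forces $a=1$, whence the vanishing first and third diagonal entries force both $b=1$ and $b=0$. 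So strictness of $\|G\|_\infty<1$ must enter your patching somewhere, and in your sketch it does not: the minimal-part $H$ from Theorem \ref{T:BRLfinstan} satisfies only a non-strict inequality, possibly with zero defect, so the cross terms feeding the unobservable block ($A_{21}x_1+B_2u$) and the output contribution $C_2x_2$ of the unreachable block have nothing to be absorbed into (note also that the unobservable block wants a \emph{small} Lyapunov weight, while the unreachable-observable block wants a large one, so a single ``sufficiently large scaling'' is not the right prescription). The repair within your strategy is to perform the $(r,\ga)$-scaling \emph{after} restricting to the minimal part---minimality is preserved under $A\mapsto rA$, $B\mapsto rB$, $C\mapsto\ga C$---so that your own rearrangement already yields a \emph{strict} KYP solution on the minimal part whose gap can absorb the cross terms for appropriately chosen block weights; or, more simply, to adopt the paper's augmentation, which renders the realization exactly minimal on $\cX$ and removes the lifting problem entirely.
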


The discussion above concerning the sufficiency of the existence of a solution $H$ to the KYP-inequality for
$\Sigma$ for $S_\Sigma$ to be in the Schur class suggests the following proof of the necessity side of
Theorem \ref{T:BRLfinstan}  based on the Kalman State-Space-Similarity Theorem from Linear Systems Theory.
Suppose that $\Sigma$ is a finite-dimensional minimal system with system matrix $M = \sbm{A & B \\ C & D}$
such that $F_\Sigma$ has analytic continuation to a Schur-class function.   It is known from circuit theory (see e.g.\ \cite{AV}) that the rational Schur-class function $F_\Sigma$ also has a realization as $F_{\Sigma'}$
 where $\Sigma'$ is a system with contractive system matrix
$M' = \sbm{A' & B' \\ C' & D}$.  By using Kalman reduction theory, we may suppose that $\Sigma'$ is
controllable and observable (i.e., minimal), and hence that $\Sigma$ and $\Sigma'$ are both minimal.  Then the
Kalman State-Space-Similarity theorem implies that there is a bounded invertible matrix $\Gamma$ so that
$M' = \sbm{ \Gamma A \Gamma^{-1} & \Gamma B \\ C \Gamma^{-1} & D}$.  Since $\| M' \| \le 1$, it is easy
to check that $H = \Gamma^* \Gamma$ is a positive-definite solution of the KYP-inequality for the
system $\Sigma$.  As for Theorem \ref{T:BRLfinstrict}, the proof of Petersen-Anderson-Jonkheere uses a
regularization technique to reduce the strict Bounded Real Lemma to the standard Bounded Real Lemma.

For the case where the state space $\cX$ and the input/output spaces
$\cU$ and $\cY$ are all allowed to be infinite-dimensional, the
results on the Bounded Real Lemma are more recent.  It turns out that
the generalizations of
Theorems \ref{T:BRLfinstan}  and \ref{T:BRLfinstrict} to the
infinite-dimensional situation are quite different, in that the first
involves unbounded operators while the second does not.

For an infinite-dimensional system $\Si$ as in \eqref{dtsystem} much
depends on what is meant by controllable and observable.  Here are a
few possibilities.

\begin{definition}\label{Dconobs}
Let $(C,A)$ be an output pair and $(A,B)$ an input pair. Define the
{\em reachability space} $\Rea (A|B)$ and the {\em observability
space} $\Obs (C|A)$ by
\begin{align}
\Rea (A|B)&=\tu{span}\{\im A^k B\colon k=0,1,2,\ldots\},
\label{ReachSpace}\\
\Obs (C|A)&=\tu{span}\{\im A^{*k} C^*\colon k=0,1,2,\ldots\}=\Rea
(A^*|C^*), \label{ObsSpace}
\end{align}
or, in the terminology of Opmeer-Staffans \cite[Definition
3.1]{OpmeerStaffans2008}, $\Rea(A|B)$ is the set of {\em finite-time
reachable states for the input pair $(A,B)$}, while $\Obs (C|A)$ is
the set of {\em finite-time reachable states for the input pair
$(A^{*},C^{*})$}.
We say that the pair $(C,A)$ is {\em exactly observable} if
$\Obs (C|A)=\cX$ and {\em approximately observable} (or simply {\em
observable} for short) if $\Obs (C|A)$ is dense in $\cX$. Note that
$(C,A)$ being observable is
equivalent to $\bigcap_{n=0}^{\infty} \kr C A^{n} = \{0\}$. Similarly,
we say that the pair $(A,B)$ is {\em exactly controllable} if $\Rea
(A|B)=\cX$ and {\em approximately controllable} (or simply {\em
controllable} for short) if $\Rea (A|B)$ is dense in $\cX$.

Another notion of minimality involves the observability operator
$\bW_o$ and
controllability operator $\bW_c$ associated with the system $\Si$,
which
in the present context may be unbounded operators; see
\eqref{bWo1}-\eqref{bWc*2}
for their definitions and Propositions \ref{P:WcWo'} and  \ref{P:WcWo} for some of
their properties.
We then say that $\Si$ is {\em $\ell^2$-exactly observable} in case $\bW_o$ is densely defined
and has adjoint operator $\bW_o^*$ (which is automatically closed and densely defined)
which is surjective ($\cX=\bW_o^* \cD(\bW_o^*)$).
We say that $\Si$ is {\em $\ell^2$-exactly controllable}
in case the adjoint controllability operator $\bW_c^*$ is densely defined and has adjoint
operator, the controllability operator $\bW_c$, (also automatically closed and densely defined)
which is surjective ($\cX=\bW_c \cD(\bW_c)$).
If $\Sigma$ is a system with system matrix $M = \sbm{ A & B \\ C &
D}$ as in \eqref{sysmat}, we say that $\Sigma$ is
{\em controllable/exactly controllable/$\ell^{2}$-exactly
controllable} if
the input pair $(A,B)$ is controllable/exactly
controllable/$\ell^{2}$-exactly controllable
respectively.  Similarly, we say that $\Sigma$ is {\em
observable/exactly
observable/$\ell^{2}$-exactly observable} if the output pair $(C, A)$
is  observable/exactly  observable/$\ell^{2}$-exactly observable
respectively.
In case $\Sigma$ is both controllable/exactly
controllable/$\ell^{2}$-exactly controllable and observable/exactly
observable/$\ell^{2}$-exactly observable, we say that
the system $\Si$ {\em minimal/exactly  minimal/$\ell^{2}$-exactly
minimal}, respectively.
\end{definition}

As we shall see, either notion of exact controllability/observability
implies (approximate) controllability/observability, but in general
neither notion of exact controllability/observability implies the
other, except with some additional hypotheses imposed (see
Proposition \ref{P:control-observe} below).
Using these notions we obtain the following variation on Theorem
\ref{T:BRLfinstan}.

\begin{theorem}[Infinite-dimensional standard Bounded Real Lemma]
\label{T:BRLinfstan}
Let $\Si$ be a discrete-time linear system as in
\eqref{dtsystem} with system matrix $M$ as in \eqref{sysmat} and
transfer function $F_\Si$ defined by \eqref{trans}.
\begin{enumerate}
\item[(1)] Suppose that the system $\Si$ is minimal, i.e., the input pair
$(A,B)$ is controllable and the output pair $(C,A)$ is observable.
Then the transfer function $F_{\Sigma}$ has an analytic continuation
to a function in the Schur class $\cS(\cU, \cY)$ if and only if there
exists a generalized positive-semidefinite solution $H$ of the
KYP-inequality \eqref{KYP1} in the following generalized sense:  $H$
is a closed, possibly unbounded, densely defined, injective, positive-definite operator on $\cX$ with domain $\cD(H^{\half})$ satisfying
\begin{equation} \label{KYP1b'}
A \cD(H^{\half}) \subset \cD(H^{\half}), \quad B \cU \subset
\cD(H^{\half}),
\end{equation}
and the spatial form of the KYP-inequality holds on the appropriate
domain:
\begin{equation}\label{KYP1b}
\left\| \begin{bmatrix} H^{\half}\!&\! 0 \\ 0 \!&\! I_{\cU}
\end{bmatrix} \begin{bmatrix} x \\ u \end{bmatrix} \right\|^{2}
- \left\| \begin{bmatrix} H^{\half} \!&\! 0 \\ 0 \!&\! I_{\cY}
\end{bmatrix} \begin{bmatrix} A \!&\! B \\ C \!&\! D \end{bmatrix}
\begin{bmatrix} x \\ u \end{bmatrix} \right\|^{2} \ge 0 \ \
(x \in \cD(H^{\half}),\, u \in \cU).
\end{equation}

\item[(2)] Suppose that $\Sigma$ is exactly minimal.  Then the transfer function
$F_{\Sigma}$ has an
analytic continuation to a function in the Schur class $\cS(\cU,
\cY)$ if and only if there exists a bounded strictly positive-definite
solution $H$ of the KYP-inequality \eqref{KYP1}. In this case $A$ has a spectral radius of at most one, and hence $F_{\Sigma}$ is in fact analytic on $\BD$.

\item[(3)]  Statement (2) above continues to hold if the ``exactly minimal'' hypothesis is replaced
by the hypothesis that $\Sigma$ be ``$\ell^2$-exactly minimal''.

\end{enumerate}
\end{theorem}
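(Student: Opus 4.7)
The plan is to follow the finite-dimensional blueprint sketched immediately after Theorem~\ref{T:BRLfinstrict}, with the classical Kalman State-Space-Similarity theorem replaced by a new infinite-dimensional analogue (the central new result advertised in the abstract) that produces a closed, densely defined, possibly unbounded intertwining operator $\Gamma$ between any two minimal realizations of the same transfer function. Sufficiency will be handled uniformly for all three parts by passing to the completion of $\cD(H^{\half})$ under the $H$-inner product; necessity in each part reduces to an application of the similarity theorem, supplemented for Parts~(2) and~(3) by a closed-graph step.

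\textbf{Sufficiency.}
Suppose $H$ is a closed, injective, densely defined, positive-definite operator on $\cX$ satisfying~\eqref{KYP1b'} and~\eqref{KYP1b}. Equip $\cD(H^{\half})$ with the inner product $\langle x, x'\rangle_H := \langle H^{\half} x, H^{\half} x'\rangle$ and let $\wtil\cX$ be its completion. The invariance~\eqref{KYP1b'} together with~\eqref{KYP1b} shows that $A|_{\cD(H^{\half})}$, $B$, and $C|_{\cD(H^{\half})}$ extend by continuity to bounded operators $\wtil A, \wtil B, \wtil C$ between the appropriate spaces built from $\wtil\cX$, $\cU$, and $\cY$, and the resulting system matrix $\wtil M = \sbm{\wtil A & \wtil B \\ \wtil C & D}$ on $\wtil\cX \oplus \cU$ is a contraction. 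The Taylor series of $F_\Si$ and $F_{\wtil\Si}$ near $0$ agree term by term (both have $k$th coefficient $CA^{k-1}B = \wtil C \wtil A^{k-1} \wtil B$ for $k \ge 1$), so $F_\Si = F_{\wtil\Si}$; applying the elementary feedback computation of Section~\ref{S:intro} to $\wtil M$ gives $F_\Si \in \cS(\cU, \cY)$.

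\textbf{Necessity.}
Given $F_\Si \in \cS(\cU, \cY)$, pick any minimal contractive realization $\Si^c = (A^c, B^c, C^c, D)$ of $F_\Si$ — for instance the de~Branges--Rovnyak functional-model realization, or any contractive realization followed by Kalman reduction. Apply the infinite-dimensional similarity theorem to the two minimal realizations $\Si$ and $\Si^c$ of $F_\Si$ to obtain a closed, densely defined, injective operator $\Gamma \colon \cX \supset \cD(\Gamma) \to \cX^c$ with dense range satisfying $A\,\cD(\Gamma) \subset \cD(\Gamma)$, $A^c \Gamma = \Gamma A$ on $\cD(\Gamma)$, $\Gamma B = B^c$, and $C = C^c \Gamma$ on $\cD(\Gamma)$. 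Define $H := \Gamma^* \Gamma$; by polar decomposition $H^{\half}$ is closed, injective, densely defined, and positive-definite with $\cD(H^{\half}) = \cD(\Gamma)$ and $\|H^{\half} x\| = \|\Gamma x\|$. The intertwinings yield~\eqref{KYP1b'}, and applying the contraction $M^c$ to $\sbm{\Gamma x \\ u}$ and translating norms via $\|\Gamma\cdot\| = \|H^{\half}\cdot\|$ yields~\eqref{KYP1b}, completing Part~(1). For Part~(2), exact controllability of $\Si$ gives $\cD(\Gamma) \supset \Rea(A|B) = \cX$, so $\cD(\Gamma) = \cX$; exact observability of $\Si$, via the analogous argument applied to the adjoint intertwining $\Gamma^*$, forces $\ran(\Gamma) = \cX^c$; a closed-graph argument then upgrades $\Gamma$ and $\Gamma^{-1}$ to bounded operators, whence $H$ is bounded and strictly positive-definite. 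Since $A$ is then similar to the contraction $A^c$, $\spec(A) \le 1$ and $F_\Si$ is analytic on $\BD$. Part~(3) is structurally identical, with the finite-time reachability and observability spaces $\Rea$, $\Obs$ replaced by the $\ell^2$-operators $\bW_c$ and $\bW_o$: surjectivity of $\bW_c$ and of $\bW_o^*$ supplies the same domain-and-range identifications.

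\textbf{Main obstacle.}
The essential technical content lies in the infinite-dimensional state-space-similarity theorem itself. Away from exact minimality the intertwining $\Gamma$ is genuinely unbounded, and its construction — defined densely on the reachability space and then closed, while preserving all three intertwining relations after closure and remaining injective with dense range — is the principal technical step. Once the similarity theorem is in place, the bounded-real-lemma deductions above are essentially bookkeeping on domains, supplemented by the closed-graph step in Parts~(2) and~(3).
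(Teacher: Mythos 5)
Your proposal is correct and follows essentially the same route as the paper: the paper packages the two directions as a lemma asserting that (generalized) KYP solutions correspond exactly to (pseudo-)similarity to a contractive system (with $H=\Gamma^*\Gamma$ in one direction and the contractive system built on $\im H^{\half}\cong$ your completion $\wtil\cX$ in the other), and then invokes the contractive realization theorem together with the three parts of the infinite-dimensional State-Space-Similarity theorem exactly as you do, including the closed-graph/open-mapping upgrade to a bounded similarity in Parts (2) and (3). The only cosmetic difference is that you verify the sufficiency direction by hand (matching Taylor coefficients of $F_\Si$ and $F_{\wtil\Si}$) rather than citing the sufficiency half of the similarity theorem, which is the same computation.
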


Let us remark here that most texts do not mention statements (2) or (3)  of
Theorem \ref{T:BRLinfstan}; indeed,  arguably it is rare
that a nonrational matrix function has a realization $\Sigma \sim M = \sbm{ A & B \\ C & D}$
which is exactly minimal or $\ell^2$-exactly minimal.
Nevertheless, we
identify a class of examples where $(A,B)$ is in fact both exactly controllable and
$\ell^2$-exactly controllable and
$(C,A)$ is both exactly observable and $\ell^2$-exactly observable;  either of these classes serves as the
stepping stone for our proof of Theorem \ref{T:BRLinfstrict} below which is a simple adaptation of the
regularization technique of Petersen-Anderson-Jonkheere \cite{PAJ} used in their proof of
the finite-dimensional, rational case.

On the other hand statement (1) of Theorem \ref{T:BRLinfstan} has
appeared in the work of
Arov-Kaashoek-Pik \cite{AKP06} (see Theorems 4.1 and 1.2 there).
Parallel results for the continuous-time setting are developed in the
paper of Arov-Staffans \cite{AS}.

We shall see that all three flavors of the standard Bounded Real Lemma as stated in
Theorem \ref{T:BRLinfstan} follow the sketch outlined above for the finite-dimensional, rational case,
where one uses the contractive realization theorem for (not necessarily rational) Schur-class functions
(see e.g.\ \cite[Theorem 5.2]{BC} or \cite[Theorem VI.3.1]{NF} as well as \cite{AKP97}), and an appropriate
infinite-dimensional State-Space-Similarity theorem as encoded in the following.

\begin{theorem}[Infinite-dimensional State-Space-Similarity theorem]  \label{T:SSS}
Let $\Sigma$ \\ and $\Sigma'$ be two systems with respective system
matrices
$$
M = \begin{bmatrix} A & B \\ C & D \end{bmatrix} \colon \begin{bmatrix} \cX \\ \cU \end{bmatrix} \to
\begin{bmatrix} \cX \\ \cY \end{bmatrix}, \qquad
M' = \begin{bmatrix} A' & B' \\ C' & D' \end{bmatrix} \colon \begin{bmatrix} \cX' \\ \cU \end{bmatrix} \to
\begin{bmatrix} \cX' \\ \cY \end{bmatrix}
$$
where $\cU$, $\cX$, $\cX'$, $\cY$ are all possibly infinite-dimensional Hilbert spaces.  Then:
\begin{enumerate}
\item[(1)]  Suppose that $\Sigma$ and $\Sigma'$ are both minimal, i.e., both are {\rm(}approximately{\rm)}
 controllable and {\rm(}approximately{\rm)} observable.  Then $\Sigma$ and $\Sigma'$ have transfer functions $F_\Sigma$
 and $F_{\Sigma'}$ agreeing on some neighborhood $\cN$ of the origin
 $$
   F_\Sigma(\lambda) = F_{\Sigma'}(\lambda) \text { for } \lambda \in \cN
 $$
 if and only if $\Sigma$ and $\Sigma'$ are {\em pseudo-similar} in the following sense
 {\rm(}see e.g.\ \cite[Section 3]{AKP06}{\rm)}:  $D= D'$ and there exists an injective,
closed, linear
operator $\Gamma \colon \cX  \to \cX'$ so that
\begin{equation}\label{pseudosim}
\begin{aligned}
    & \cD(\Gamma) \text{ is dense in } \cX, \quad \im \Gamma  \text{ is
dense in
    } \cX',  \\
    & A \cD(\Gamma) \subset \cD(\Gamma), \quad \Gamma A|_{\cD(\Gamma)} = A'
    \Gamma,   \\
 &B  \cU \subset \cD(\Gamma), \quad B' = \Gamma B,   \\
 & C|_{\cD(\Gamma)} = C' \Gamma.
\end{aligned}
\end{equation}

\item[(2)] Suppose that $\Sigma$ is exactly minimal while $\Sigma'$ is {\rm(}approximately{\rm)} minimal.  Then $F_\Sigma$ and $F_{\Sigma'}$ are identical on a neighborhood of $0$ if and only if $\Sigma$ and $\Sigma'$ are {\em similar}, i.e.,
there is a bounded and boundedly invertible linear operator $\Gamma \colon  \cX \to \cX'$ so that
\begin{equation}   \label{sys-sim}
  \begin{bmatrix} A' & B' \\ C' & D' \end{bmatrix} = \begin{bmatrix} \Gamma A \Gamma^{-1} & \Gamma B \\
  C \Gamma^{-1} & D \end{bmatrix}.
\end{equation}

\item[(3)]  Suppose that $\Sigma$ is $\ell^2$-exactly minimal, while $\Sigma'$ is {\rm(}approximately{\rm)} minimal and has
bounded controllability operator $\bW_c'$ as well as bounded observability operator $\bW_o'$.  Then $F_\Sigma$ and $F_{\Sigma'}$ are identical on a neighborhood of $0$ if and only if $\Sigma$ and $\Sigma'$ are similar as described in item (2).
\end{enumerate}
\end{theorem}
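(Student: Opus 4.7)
The strategy is to follow the classical Kalman state-space-similarity argument adapted to infinite dimensions, with Part (1) constructing a closed densely defined pseudo-similarity and Parts (2) and (3) progressively upgrading this to a genuine bounded similarity as exact or $\ell^2$-exact minimality is imposed on $\Sigma$. For Part (1), I would define a candidate $\Gamma_0$ on the dense subspace $\Rea(A|B)$ by
\[
\Gamma_0\Bigl(\sum_{k=0}^N A^k B u_k\Bigr) = \sum_{k=0}^N A'^k B' u_k
\]
and check well-definedness from the matching Markov parameters $CA^k B = C'A'^k B'$ (read off from $F_\Sigma = F_{\Sigma'}$ near $0$) combined with observability of $\Sigma'$: if $\sum A^k B u_k = 0$, then applying $CA^m$ yields $C'A'^m(\sum A'^k B' u_k) = 0$ for every $m$, forcing $\sum A'^k B' u_k = 0$. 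The intertwinings $\Gamma_0 A = A'\Gamma_0$, $\Gamma_0 B = B'$, $C = C'\Gamma_0$ are then automatic, and a symmetric construction from $\Rea(A'|B')$ gives a candidate inverse; this shows $\Gamma_0$ is closable, and its closure $\Gamma$ is the required pseudo-similarity. The identity $D=D'$ falls out of $F_\Sigma(0) = F_{\Sigma'}(0)$.

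For Part (2), exact controllability of $\Sigma$ gives $\Rea(A|B) = \cX$ outright, so $\cD(\Gamma) = \cX$ and the closed graph theorem forces $\Gamma$ to be bounded. To upgrade to bounded invertibility I pass to the adjoint: the intertwinings from Part (1) dualize to $A^*\Gamma^* = \Gamma^* A'^*$, $B^*\Gamma^* = B'^*$, $C^* = \Gamma^* C'^*$, whence
\[
\im \Gamma^* \supseteq \textup{span}\{A^{*k} C^* \cY : k \ge 0\} = \Obs(C|A) = \cX
\]
by exact observability of $\Sigma$. Surjectivity of $\Gamma^*$ is equivalent to $\Gamma$ being bounded below, which together with the dense range $\im \Gamma \supseteq \Rea(A'|B')$ (inherited from controllability of $\Sigma'$) delivers a bounded bijection with bounded inverse.

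For Part (3), $\Rea(A|B)$ need not exhaust $\cX$, but $\ell^2$-exact controllability provides the closed surjective controllability operator $\bW_c$. My plan is to extend the pseudo-similarity $\Gamma$ of Part (1) to all of $\cX$ via the identity $\Gamma \bW_c u = \bW_c' u$ for $u \in \cD(\bW_c)$: approximating such $u$ by finite-support sequences $u^N$ in the graph norm of $\bW_c$, the states $\bW_c u^N \in \Rea(A|B) \subset \cD(\Gamma)$ converge to $\bW_c u$ in $\cX$, while $\Gamma \bW_c u^N = \bW_c' u^N \to \bW_c' u$ in $\cX'$ since $\bW_c'$ is bounded, so closedness of $\Gamma$ pushes $\bW_c u$ into $\cD(\Gamma)$. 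Surjectivity of $\bW_c$ then yields $\cD(\Gamma) = \cX$, and the closed graph theorem makes $\Gamma$ bounded. A dual argument based on $\ell^2$-exact observability of $\Sigma$ and boundedness of $\bW_o'$ shows $\im \Gamma^* \supseteq \bW_o^* \cD(\bW_o^*) = \cX$, giving bounded invertibility exactly as in Part (2). The main technical obstacle is justifying that the finite-support sequences form a core for $\bW_c$ (and the analogous statement for $\bW_o^*$), so that the graph-norm approximation step is legitimate; I anticipate this to follow from the characterization of $\bW_c$ and $\bW_o$ in terms of their adjoints recorded in Propositions \ref{P:WcWo'} and \ref{P:WcWo}.
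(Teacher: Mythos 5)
Your overall architecture matches the paper's: part (1) is the classical Helton/Ball--Cohen/Arov construction of $\Gamma_0$ on $\Rea(A|B)$, and parts (2) and (3) upgrade its closure to a bounded similarity via the Closed Graph Theorem. Where you genuinely diverge is in how bounded \emph{invertibility} is obtained. In part (2) the paper re-runs the entire construction on the adjoint systems $\Sigma^*$, $\Sigma'^*$ to produce a second bounded operator $\widetilde\Gamma$ whose adjoint is an explicit left inverse of $\Gamma$, and then argues that $\im \Gamma$ is closed and dense; you instead dualize the already-established intertwining relations to get $\im \Gamma^* \supseteq \Obs(C|A) = \cX$, so $\Gamma^*$ is onto, $\Gamma$ is bounded below, and density of $\im\Gamma \supseteq \Rea(A'|B')$ finishes the job. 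That is shorter and equally valid. In part (3) the paper does not extend the part-(1) operator at all: it defines $\Gamma = \bW_c' \bW_c^\dagger$ from scratch using Moore--Penrose inverses (available because Corollary \ref{C:ell2implics} makes $\bW_o$ and $\bW_c^*$ bounded and bounded below), derives $\Gamma \bW_c = \bW_c'$ from the Hankel identity $\bW_o \bW_c = \bW_o' \bW_c'$, and exhibits $\bW_o^\dagger \bW_o'$ as an explicit left inverse. Your route --- pushing $\im \bW_c = \cX$ into $\cD(\Gamma)$ by closedness of $\Gamma$, then dualizing via $\bW_o' \Gamma = \bW_o$, hence $\Gamma^* \bW_o'^* = \bW_o^*$ --- also works; and the ``core'' worry you flag at the end evaporates, since under $\ell^2$-exact minimality of $\Sigma$ Corollary \ref{C:ell2implics} already gives that $\bW_c$ and $\bW_o$ are bounded and everywhere defined, so the approximation by finitely supported inputs is just norm continuity.

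One caveat on part (1): the existence of a densely defined candidate \emph{inverse} on $\Rea(A'|B')$ does not by itself yield closability of $\Gamma_0$ --- a densely defined injective operator with densely defined inverse can fail to be closable. The standard fix is to pair $\Gamma_0$ against the dual map $\sum_k A'^{*k} C'^* y_k \mapsto \sum_k A^{*k} C^* y_k$ defined on $\Obs(C'|A')$: the matching Markov parameters show this map is contained in $\Gamma_0^*$, and observability of $\Sigma'$ makes its domain dense, so $\Gamma_0^*$ is densely defined and $\Gamma_0$ is closable. Finally, you omit the (trivial) sufficiency direction of the ``if and only if'': from \eqref{pseudosim} one computes $C' A'^n B' = C' A'^n \Gamma B = C' \Gamma A^n B = C A^n B$ and $D = D'$, which is exactly equality of the transfer functions near $0$.
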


We note that item (1) in Theorem \ref{T:SSS} has been known for some time;
one can trace its origins to the work of Helton in \cite[Theorem 3.2]{Helton74} and of
Ball-Cohen \cite[Theorem 3.2]{BC}  with the fact that the pseudo-similarity can be taken
to be closed added later by Arov \cite{Arov79a}.  Indeed, this State-Space-Pseudo-Similarity
theorem is the main ingredient behind the proof of the first flavor of the infinite-dimensional
standard Bounded Real Lemma given above (item (1) in Theorem \ref{T:BRLinfstan}) in the work
of Arov-Kaashoek-Pik \cite{AKP06}.  Essentially the same proof can be used to prove items (2) and (3)
in Theorem \ref{T:BRLinfstan}, but with items (2) and (3) respectively from Theorem \ref{T:SSS}  (introduced
we believe here for the first time) used as the relevant State-Space-Similarity theorem
in place of item (1) from Theorem \ref{T:SSS}.

 For the reader's convenience
we include a complete, self-contained proof of part (1) of Theorem
\ref{T:SSS}, as the same framework applies to the proof of the new results, namely,
items (2) and (3) in Theorem \ref{T:SSS}.

Furthermore, with respect to statement (1) we mention that there
exist systems $\Si$  having transfer function $F_{\Sigma}$ in the Schur
class $\cS(\cU, \cY)$ such that every generalized positive-semidefinite solution  $H$ of the spatial KYP inequality
\eqref{KYP1b} is unbounded with unbounded inverse $H^{-1}$
(see \cite[Section 4.5]{AKP06}).

On the other hand, the strict Bounded Real Lemma extends to the
infinite-dimensional setting in essentially the same form as for the
finite-dimensional case.

\begin{theorem}[Infinite-dimensional strict Bounded Real Lemma]
\label{T:BRLinfstrict}
Let $\Si$ be a discrete-time linear system as in
\eqref{dtsystem} with system matrix $M$ as in \eqref{sysmat} and
transfer function $F_\Si$ defined by \eqref{trans}. Assume that $A$
is exponentially stable, i.e., $\spec(A) < 1$.  Then the transfer
function $F_{\Sigma}$ is in the strict Schur class $\cS^{o}(\cU,
\cY)$ if and only if there exists a bounded strictly positive-definite
solution $H$ of the  strict KYP-inequality \eqref{KYP2}.
\end{theorem}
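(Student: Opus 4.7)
The plan is to adapt the Petersen--Anderson--Jonkheere regularization strategy indicated in the paragraph preceding this theorem, using the infinite-dimensional standard Bounded Real Lemma (Theorem~\ref{T:BRLinfstan}(2)) as the stepping stone that the classical rational Bounded Real Lemma plays in the original PAJ argument. The sufficiency direction will be a direct strengthening of the feedback argument recalled in the introduction, and the necessity direction will construct a bounded strictly positive-definite $H$ for $\Sigma$ by applying part (2) of Theorem~\ref{T:BRLinfstan} to a small $\epsilon$-perturbation of $\Sigma$ engineered to be exactly minimal.

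For sufficiency, suppose a bounded, strictly positive-definite $H$ satisfies~\eqref{KYP2}. Then the similar system $\widetilde\Sigma$ with system matrix $\widetilde M = \sbm{H^{\half}AH^{-\half} & H^{\half}B \\ CH^{-\half} & D}$ has $\|\widetilde M\|\le r$ for some $r<1$; in particular $A$ is similar to a strict contraction and $\spec(A)<1$. Applying the feedback computation \eqref{syseq1}--\eqref{syseq2}, now with the strict bound $\|\widetilde M\|^{2}\le r^{2}<1$, I obtain $\|F_{\Sigma}(z)\|=\|F_{\widetilde\Sigma}(z)\|\le r$ uniformly on $\BD$, so $F_{\Sigma}\in\cS^{o}(\cU,\cY)$.

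For necessity, fix $\rho<1$ with $\|F_{\Sigma}\|_{\infty}\le\rho$, and for a small $\epsilon>0$ augment $\Sigma$ to the system $\widetilde\Sigma$ with input space $\cU\oplus\cX$, output space $\cY\oplus\cX\oplus\cU\oplus\cX$, state space $\cX$, and system matrix
\[
\widetilde M=\begin{bmatrix} A & B & \epsilon I_{\cX}\\ C & D & 0\\ \epsilon I_{\cX} & 0 & 0\\ 0 & \epsilon I_{\cU} & 0\\ 0 & 0 & \epsilon I_{\cX}\end{bmatrix}.
\]
The $\epsilon I_{\cX}$ blocks force $\im\widetilde B=\cX=\im\widetilde C^{*}$, so $\widetilde\Sigma$ is exactly minimal. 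Since $\spec(A)<1$, the resolvent $(I-zA)^{-1}$ is uniformly bounded by some constant $K$ on $\overline{\BD}$ while $\|F_{\Sigma}(z)\|\le\rho<1$ there; splitting the cross-term $F_{\Sigma}(z)u+\epsilon zC(I-zA)^{-1}v$ by $(a+b)^{2}\le(1+\delta)a^{2}+(1+\delta^{-1})b^{2}$ with $(1+\delta)\rho^{2}<1$ and crudely bounding the remaining blocks in terms of $K$, $\|B\|$, $\|C\|$, I arrive at $\|F_{\widetilde\Sigma}(z)\|\le 1$ on $\BD$ provided $\epsilon$ is sufficiently small, so $F_{\widetilde\Sigma}$ lies in the appropriate Schur class. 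Theorem~\ref{T:BRLinfstan}(2) applied to $\widetilde\Sigma$ then produces a bounded, strictly positive-definite $H$ on $\cX$ with $\widetilde M^{*}\sbm{H & 0\\ 0 & I}\widetilde M\preceq\sbm{H & 0\\ 0 & I}$. Plugging in the test vector $(x,u,0)\in\cX\oplus\cU\oplus\cX$, the rows of $\widetilde C$ and $\widetilde D$ containing $\epsilon I_{\cX}$ and $\epsilon I_{\cU}$ contribute an extra $\epsilon^{2}(\|x\|^{2}+\|u\|^{2})$ on the left, reducing the inequality to $M^{*}\sbm{H & 0\\ 0 & I}M\preceq\sbm{H & 0\\ 0 & I}-\epsilon^{2}I$, which is exactly the strict KYP inequality~\eqref{KYP2}.

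The only genuinely delicate step is the verification that $F_{\widetilde\Sigma}\in\cS$; everything else is bookkeeping plus the appeal to Theorem~\ref{T:BRLinfstan}(2). This is where the hypothesis $\spec(A)<1$, rather than the weaker $\spec(A)\le 1$, is truly used: uniform boundedness of $(I-zA)^{-1}$ on $\overline{\BD}$, together with the strict bound $\|F_{\Sigma}\|_{\infty}<1$, provides exactly the slack needed for the $\epsilon$-perturbations in $\widetilde B$, $\widetilde C$, $\widetilde D$ to keep $F_{\widetilde\Sigma}$ inside the closed unit ball.
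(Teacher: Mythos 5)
Your proposal is correct and follows essentially the same route as the paper: the necessity direction is exactly the Petersen--Anderson--Jonkheere $\epsilon$-augmentation of the system matrix to force exact minimality (your extra output row $\sbm{0 & 0 & \epsilon I_{\cX}}$ is harmless but not needed), followed by an appeal to Theorem~\ref{T:BRLinfstan}(2) and deletion of the auxiliary rows and columns to recover the strict KYP inequality \eqref{KYP2}. For sufficiency the paper instead scales $B,D$ by some $\gamma>1$ and quotes the sufficiency half of Theorem~\ref{T:BRLinfstan}(2), but your direct strict-contraction version of the feedback computation \eqref{syseq1}--\eqref{syseq2} is an equally valid (and equivalent) way to conclude $\|F_\Sigma\|_\infty<1$.
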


This result is asserted in a number of papers in the engineering
literature, in particular in \cite[page 1490]{DL} where it is
attributed to Yakubovich \cite{Yak74, Yak75}; however it appears that
Yakubovich's stated result must be combined with some additional
(infinite-dimensional) inertia theorems to get the precise statement
here, namely that the operator $H$ is not only bounded selfadjoint but also
(invertible) positive-definite. The relatively recent paper of
Rantzer \cite{Rantzer} presents a new elementary proof using
convexity analysis for the finite-dimensional case.
 The infinite-dimensional version of the result appears
implicitly in the paper of Ben-Artzi--Gohberg--Kaashoek
\cite{BAGK95}, where the result is given in the more complicated
context of time-varying systems with dichotomy.

The original inspiration for the present paper was to resolve the
apparent discrepancies in these infinite-dimensional versions of the Bounded
Real Lemma, where in some instances it appears that unbounded operators are
required \cite{AKP06, AKP05},while in other instances one gets away with bounded
operators just as in the finite-dimensional
case \cite{DL, BAGK95}. A first inspection of Theorems
\ref{T:BRLinfstan} and \ref{T:BRLinfstrict}
suggests that this issue can be resolved by carefully distinguishing
between the {\em standard} and the {\em strict} Bounded Real Lemmas: one requires the
possibility of unbounded positive-definite solutions of the KYP-inequality in the standard
case but gets away with only bounded and boundedly invertible solutions in the strict
case.

Let us mention now how our results relate to a couple of other approaches which have appeared in the
literature:

\noindent
 \textbf{1.}  It is easy to see that a system $\Sigma'$ being {\em exactly minimal} implies that $\Sigma$
is in particular {\em {\rm(}approximately{\rm)} minimal}.  Hence in item (2) of Theorem \ref{T:SSS}, the result still
holds if the {\em minimality} assumption on $\Sigma'$ is replaced by a {\em exact minimality} hypothesis, in which
case the hypothesis in item (2) of Theorem \ref{T:SSS} assumes the more symmetric form:  assume that
{\em both $\Sigma$ and $\Sigma'$ are exactly minimal}.  Similarly, we shall see
as a consequence of the results in Section \ref{S:obscon} (specifically, Corollary \ref{C:ell2implics}
and items (4) and (9) in Proposition \ref{P:control-observe}) that
$\ell^2$-exact minimality implies boundedness of the associated observability and controllability operators
 $\bW_o$ and $\bW_c$ as well as (approximate) minimality.  Hence the result of item (3) in
 Theorem \ref{T:SSS} still holds if we impose the more symmetric assumption:  {\em
both $\Sigma$ and $\Sigma'$ are $\ell^2$-exactly minimal}.
In the language of  Chakhchoukh-Opmeer \cite{CO16}, the content of
items (2) and (3) in Theorem  \ref{T:SSS} is then that either of the two conditions (i) {\em exact
minimality} or (ii)  {\em $\ell^2$-exact minimality}
 gives a notion of {\em canonical realization} which leads to a good
 state-space isomorphism theorem, but with the caveat that not all
 transfer functions have such canonical realizations.  The approach
of \cite{CO16} to a good state-space isomorphism theorem, on the other hand, is to
 extend the category where state spaces are to reside from Hilbert
 spaces to locally convex topological vector spaces which are
 Hausdorff and barrelled, and then to assume that the given systems
are minimal in the sense that neither has a nontrivial Kalman reduction.

\smallskip

 \noindent
\textbf{2.} Willems \cite{Wil72a, Wil72b} has given an energy-dissipation
interpretation of positive-definite solutions $H$ of the KYP
inequality as follows. We view the function $S:x \mapsto \|x\|_H^2: =
\langle Hx, x \rangle_{\cX}$ as a measure of energy stored by the
state $x$ in the state space $\cX$. The KYP-inequality \eqref{KYP1}
can then be rewritten as
\begin{equation}   \label{storage}
  S(\bx(n+1)) - S(\bx(n)) \le \|\bu(n)\|^{2} - \| \by(n) \|^{2}
\end{equation}
which should hold for any system trajectory $(\bu(n), \bx(n),
\by(n))_{n  \in {\mathbb Z}}$ of \eqref{dtsystem}.
Let us say that a function $S \colon \cX \to {\mathbb R}_{+}$ is a {\em
storage function} for the system $\Sigma$ if the energy balance
relation \eqref{storage} holds over all trajectories of the system,
subject to the additional normalization condition
\begin{equation}   \label{normalization}
\min_{x \in \cX} S(x) = S(0) = 0.
\end{equation}
In words this says: {\em The net energy stored by the system in the
transition from state $\bx(n)$ to $\bx(n+1)$ is no more than the net
energy supplied to the system from the outside environment, as
measured by the supply rate $s(\bu(n), \by(n)) = \|\bu(n)\|^{2} -
\|\by(n)\|^{2}$.}  In a forthcoming report \cite{KYP2}, we show how to
arrive at the infinite-dimensional Bounded Real Lemmas as presented
here via explicit computation of extremal Willems storage functions, rather
than via application of  infinite-dimensional State-Space-Similarity theorems
as is done here.

\smallskip

 The paper is organized as follows.  After the current Introduction, Section~\ref{S:obscon} develops more precise
 statements concerning observability operators $\bW_o$ and controllability operators $\bW_c$ needed in the sequel
 for the general unbounded setting.
 Section \ref{S:SSS} proves the new parts (2) and (3)
 of the infinite-dimensional State-Space-Similarity theorem (Theorem \ref{T:SSS}), as well as sketches the
 proof of part (1) needed as the framework of the proofs of (2) and (3).
 Section \ref{S:infstanBRL} goes through the three flavors of the infinite-dimensional standard Bounded Real Lemma
 (Theorem \ref{T:BRLinfstan}) while the final section (Section \ref{S:infstrictBRL}) shows how the regularization technique
 of Petersen-Anderson-Jonckheere \cite{PAJ} can be adapted to this infinite-di\-mensional setting to give a proof of the
 infinite-dimensional strict Bounded Real Lemma (Theorem \ref{T:BRLinfstrict}).

\section{The observability and controllability operators}
\label{S:obscon}

In this section we introduce the observability and controllability operators associated with the discrete-time linear system $\Si$ given by \eqref{dtsystem} and derive some of their basic properties. For the case of a general system $\Sigma$, we define the {\em observability operator} $\bW_{o}$ associated with $\Si$ to be the possibly unbounded operator with domain $\cD(\bW_{o})$ in $\cX$ given by
\begin{equation} \label{bWo1}
\cD(\bW_{o}) = \{ x \in \cX \colon \{ C A^{n} x\}_{n \ge 0}
\in\ell^{2}_{\cY}({\mathbb Z}_{+})\}
\end{equation}
with action given by
\begin{equation}   \label{bWo2}
\bW_{o} x =  \{ C A^{n}  x\}_{n \ge 0} \text{ for } x \in
\cD(\bW_{o}).
\end{equation}
Dually, we define the {\em adjoint controllability operator} $\bW_{c}^{*}$ associated with $\Si$ to have domain
\begin{equation}   \label{bWc*1}
\cD(\bW_{c}^{*}) = \{ x \in \cX \colon \{B^* A^{*(-n-1)} x\}_{n\le
-1} \in\ell^{2}_{\cU}({\mathbb Z}_{-})\}
\end{equation}
with action given by
\begin{equation}   \label{bWc*2}
\bW_{c}^{*} x = \{B^* A^{*(-n-1)} x\}_{n\le -1} \text{ for } x \in
\cD(\bW_{c}^{*}).
\end{equation}
It can happen that $\cD(\bW_o) = \{0\}$ (e.g., $\cY = \cX = {\mathbb C}$ with $C=1$,
$A=2$), and similarly for $\cD(\bW_c^*)$.  Nevertheless, both $\bW_o$ and $\bW_c^*$ are always
closed operators, and, when it is the case that their domains are dense and hence they have adjoints,
the adjoints are explicitly computable, as recorded in the next result.

\begin{proposition}  \label{P:WcWo'}
Let $\Sigma$ be a system as in \eqref{dtsystem} with observability operator $\bW_o$ and adjoint controllability
operator $\bW_c^*$ as in \eqref{bWo1}--\eqref{bWc*2}.  Then:
\begin{enumerate}
\item[(1)] $\bW_o$ is a closed operator on its domain \eqref{bWo1}.

\item[(2)] Assume that $\cD(\bW_o)$ is dense in $\cX$.  Then
the adjoint $\bW_o^*$ of $\bW_o$ is a closed,
densely defined operator with domain
$\cD(\bW_o^*)$ containing the linear manifold
$\ell_{\tu{fin},\cY}(\BZ_+)$ of finitely supported sequences in
$\ell^2_\cY(\BZ_+)$.  In general, $\cD(\bW_{o}^{*})$ is characterized
as the set of all $\by \in \ell^{2}_{\cY}({\mathbb Z}_{+})$ such that
there exists a vector $x_{o} \in \cX$ such that the limit
$\lim_{K \to \infty}\langle  x, \sum_{k=0}^{K} A^{*k} C^{*} \by(k)
 \rangle_{\cX}$ exists for each $x \in \cD(\bW_o)$ and is given  by
\begin{equation}   \label{limit-o}
 \lim_{K \to \infty}\langle  x, \sum_{k=0}^{K} A^{*k} C^{*} \by(k)
 \rangle_{\cX} = \langle x, x_{o} \rangle_{\cX},
\end{equation}
and then the action of
$\bW_{o}^{*}$ is given by
\begin{equation}   \label{Wo*act}
    \bW_{o}^{*} \by = x_{o}
 \end{equation}
 where $x_{o}$ is as in \eqref{limit-o}.
  In particular, $\ell_{\tu{fin}, \cY}({\mathbb Z}_+)$ is contained in $\cD(\bW_o^*)$ and the observability
space defined in \eqref{ObsSpace} is given by
$$
\Obs (C|A) = \bW_{o}^{*} \ell_{\tu{fin}, \cY}({\mathbb Z}_{+}).
$$
Thus, if in addition $(C,A)$ is observable, then $\bW_o^*$ has dense range.

\item[(3)] The adjoint controllability operator $\bW_c^*$ is closed on its domain \eqref{bWc*1}.

\item[(4)]  Assume $\cD(\bW_c^*)$ is dense in $\cX$.  Then
the controllability operator $\bW_c$ defined as the adjoint of $\bW_c^*$ is a closed,
 densely defined operator with domain $\cD(\bW_c)$ containing the linear manifold
$\ell_{\tu{fin},\cU}(\BZ_-)$ of finitely supported sequences in
$\ell^2_\cU(\BZ_-)$.  In general, $\cD(\bW_{c})$ is characterized as
the set of all $\bu \in \ell^{2}_{\cU}({\mathbb Z}_{-})$ such that
there exists a vector $x_{c} \in \cX$ so that
$ \lim_{K \to \infty} \langle x, \sum_{k=-K}^{-1} A^{-k-1} B \bu(k)
    \rangle_{\cX} $ exists for each $x \in \cD(\bW_{c}^{*})$ and is given by
\begin{equation}  \label{limit-c}
\lim_{K \to \infty} \langle x, \sum_{k=-K}^{-1} A^{-k-1} B \bu(k)
    \rangle_{\cX} = \langle x, x_{c} \rangle_{\cX},
\end{equation}
and then the action of $\bW_{c}$ is given by
\begin{equation}  \label{Wc-act}
    \bW_{c} \bu = x_{c}
\end{equation}
where $x_{c}$ is as in \eqref{limit-c}.
 In particular, the reachability space
$\Rea (A|B)$ is equal to $\bW_{c} \ell_{{\rm fin}, \cU}({\mathbb
Z}_{-})$.
Thus, if in addition $(A,B)$ is controllable, then $\bW_c$ has dense range.
\end{enumerate}
\end{proposition}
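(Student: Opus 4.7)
The plan is to treat the observability operator $\bW_o$ first in detail and then obtain the results for $\bW_c^*$ by the obvious structural duality (replacing $A, C$ by $A^*, B^*$ and $\BZ_+$ by $\BZ_-$). For item (1), I would prove closedness of $\bW_o$ directly from the definition: if $x_n \in \cD(\bW_o)$ with $x_n \to x$ in $\cX$ and $\bW_o x_n \to \by$ in $\ell^2_\cY(\BZ_+)$, then for each fixed $k \ge 0$, continuity of the bounded operator $CA^k$ gives $CA^k x_n \to CA^k x$, while coordinatewise convergence from $\ell^2$-convergence gives $CA^k x_n \to \by(k)$. Hence $\{CA^k x\}_{k \ge 0} = \by \in \ell^2_\cY(\BZ_+)$, so $x \in \cD(\bW_o)$ and $\bW_o x = \by$. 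Item (3) follows by the same argument applied to the bounded coefficients $B^* A^{*(-n-1)}$ for $n \le -1$.

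For item (2), the general theory of unbounded operators on Hilbert space gives for free that, whenever $\bW_o$ is densely defined, the adjoint $\bW_o^*$ is a well-defined closed operator, and (because $\bW_o$ is itself closed by item (1)) that $\bW_o^*$ is moreover densely defined with $(\bW_o^*)^{*} = \bW_o$. The characterization of $\cD(\bW_o^*)$ then comes from unpacking the defining identity $\langle \bW_o x, \by \rangle = \langle x, \bW_o^* \by \rangle$: a sequence $\by \in \ell^2_\cY(\BZ_+)$ lies in $\cD(\bW_o^*)$ precisely when the linear functional $x \mapsto \langle \bW_o x, \by \rangle$ on $\cD(\bW_o)$ extends to a bounded functional on $\cX$, in which case its Riesz representer is $\bW_o^* \by$. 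Writing
\[
\langle \bW_o x, \by \rangle = \sum_{k=0}^\infty \langle CA^k x, \by(k) \rangle = \lim_{K \to \infty} \Bigl\langle x, \sum_{k=0}^{K} A^{*k} C^* \by(k) \Bigr\rangle
\]
then identifies this condition with the one appearing in \eqref{limit-o}, with the action formula \eqref{Wo*act} falling out as a byproduct.

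To see that $\ell_{\tu{fin},\cY}(\BZ_+) \subset \cD(\bW_o^*)$, I would simply observe that when $\by$ has support in $\{0, \dots, N-1\}$ the partial sums $\sum_{k=0}^{K} A^{*k} C^* \by(k)$ are eventually constant in $K$ and equal to $\sum_{k=0}^{N-1} A^{*k} C^* \by(k) \in \cX$, so the limit \eqref{limit-o} holds trivially with $x_o$ equal to this fixed vector. Taking $\by = \delta_k \otimes y$ with $y \in \cY$ and $k \ge 0$ shows $\bW_o^*$ sends these generators to $A^{*k} C^* y$; by linearity the image of $\ell_{\tu{fin},\cY}(\BZ_+)$ under $\bW_o^*$ is exactly $\textup{span}\{\im A^{*k} C^* \colon k \ge 0\} = \Obs(C|A)$. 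Density of $\Obs(C|A)$ in $\cX$ under the observability assumption immediately yields density of $\ran \bW_o^*$.

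Item (4) is then the dual of item (2) under the formal substitutions $A \rightsquigarrow A^*$, $C \rightsquigarrow B^*$, $\BZ_+ \rightsquigarrow \BZ_-$; the only care needed is bookkeeping of the time-indexing convention so that the partial sums align with \eqref{limit-c}. I do not anticipate any serious obstacle in this proof --- the main subtlety is to invoke the density hypothesis $\overline{\cD(\bW_o)} = \cX$ (respectively $\overline{\cD(\bW_c^*)} = \cX$) only where it is actually used, namely to make sense of the adjoint at all and to conclude via item (1) that the adjoint is in turn densely defined, while keeping the rest of the argument purely a matter of rewriting a convergent inner product as a Riesz representation.
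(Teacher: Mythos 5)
Your proposal is correct and follows essentially the same route as the paper: closedness of $\bW_o$ via coordinatewise convergence, the adjoint's existence and closedness from the general theory of densely defined closed operators, the domain characterization by rewriting $\langle \bW_o x, \by\rangle$ as the limit of partial sums $\langle x, \sum_{k=0}^K A^{*k}C^*\by(k)\rangle$, and items (3)--(4) by duality (the paper phrases the duality as applying (1)--(2) to the adjoint system $\Sigma^*$, which is the same bookkeeping you describe). No gaps.
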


\begin{proof}  Note that once (1) and (2) are verified, (3) and (4) follow directly by applying
(1) and (2)  to the adjoint system $\Si^*$ defined by
\begin{equation}\label{dtsystem*}
\Si^{*}:=\left\{
\begin{array}{ccc}
\bx(n)&=&A^{*}\bx(n+1)+C^{*}\by(n),\\
\bu(n)&=&B^{*}\bx(n+1)+D^{*}\by(n),
\end{array}
\right. \qquad (n\in\BZ).
\end{equation}
Hence it suffices to prove (1) and (2).

To show that $\bW_{o}$ is closed, we must show:  {\em Whenever
$\{x_{k}\}_{k \in {\mathbb Z}_{+}}$ is a sequence of vectors in
$\cD(\bW_{o})$ converging to a vector $x \in \cX$ such that the
output sequence $\by_{k} = \bW_{o} x_{k}$ converges to a vector $\by
\in \ell^{2}_{\cY}({\mathbb Z}_{+})$, then it follows that $x \in
\cD(\bW_{o})$ and $\bW_{o} x = \by$.}
We therefore assume that we have a sequence of vectors $\{x_{k}\}_{k
\ge 0}$ from $\cD(\bW_{o})$ with $\lim_{k\to \infty} x_{k} = x$ in
$\cX$ and $\lim_{k \to \infty} \bW_{o} x_{k} = \by$ in
$\ell^{2}_{\cY}({\mathbb Z}_{+})$.
Fix $n\in\BZ_+$. From the assumption that $\lim_{k \to \infty} x_{k}
\to x$ in $\cX$, since $C$ and $A$ are bounded operators, it follows
that
\begin{equation}  \label{lim1}
\lim_{k \to \infty} C  A^{n} x_{k} = C A^{n} x\text{ in } \cY.
\end{equation}
On the other hand, continuity of the evaluation map $\boldsymbol{\rm
ev}_{n} \colon \ell^{2}_{\cY}({\mathbb Z}_{+})\to \cY$ given by
$\boldsymbol{\rm ev}_{n} \colon \by \mapsto \by(n)$ implies that
\[
\lim_{k\to\infty} CA^n x_k
=\lim_{k\to\infty} \boldsymbol{\rm ev}_{n}\bW_o x_k
=\boldsymbol{\rm ev}_{n} \lim_{k\to\infty} \bW_o x_k
=\boldsymbol{\rm ev}_{n}\by=\by(n)
\]
for each nonnegative integer $n$.
Thus $CA^n x=\by(n)$ holds for each $n\in\BZ_+$. This implies that
$\{CA^n x\}_{n\in\BZ_+}$ is in $\ell^2_\cY(\BZ_+)$, hence
$x\in\cD(\bW_o)$, and $\bW_o x=\{CA^n
x\}_{n\in\BZ_+}=\{\by(n)\}_{n\in\BZ_+}=\by$.
Thus $\bW_o$ is a closed operator and (1) follows.

Let us now assume that $\cD(\bW_o)$ is dense. As we have shown that $\bW_o$ is closed,
it follows that $\bW_o$ is adjointable with adjoint $\bW_o^*$ also closed and densely defined (see
Theorem VIII.1 of \cite{RS}). For the particular case of $\bW_o^*$ here, we show that in fact $\cD(\bW_o^*)$ contains the dense linear manifold $\ell_{\tu{fin}, \cY}({\mathbb Z}_+)$. Let $\by \in
\ell_{{\rm fin}, \cY}({\mathbb Z}_{+})$. Define $x_{o} \in \cX$ by
the finite sum $x_{o} = \sum_{n \in {\mathbb Z}} A^{*n} C^{*}
\by(n)$. Then for each  $x \in \cD(\bW_{o})$ we have
\[
\langle \bW_{o} x, \by\rangle_{\ell^{2}_{\cY}({\mathbb Z}_{+})}
=\sum_{n \in {\mathbb Z}_{+}} \langle C A^{n} x, \by(n) \rangle_{\cY}
= \sum_{n \in {\mathbb Z}_{+}} \langle x, A^{n*} C^{*}
\by(n)\rangle_{\cX}
= \langle x, x_{o} \rangle_{\cX}.
\]
This shows that $\by \in \cD(\bW_{o}^{*})$ with $\bW_{o}^{*}\by =
x_{o}$. We obtain that $\ell_{{\rm fin}, \cY}({\mathbb Z}_{+})$  is a
subset of  $\cD(\bW_{o}^*)$. Since $\ell_{{\rm fin}, \cY}({\mathbb
Z}_{+})$ is dense in $\ell^2_\cY(\BZ_+)$, so is $\cD(\bW_{o}^*)$.

More generally, suppose that $\by \in \ell^{2}_{\cY}({\mathbb
Z}_{+})$ is such that there exists a vector $x_{o} \in \cX$ so that
\eqref{limit-o} holds for all $x \in \cD(\bW_{o})$.  Then, for $x \in
\cD(\bW_{o})$ we have
$$
\langle \bW_{o} x, \by \rangle_{\ell^{2}_{\cY}({\mathbb Z}_{+})} =
\sum_{k=0}^{\infty} \langle C A^{k} x, \by(k) \rangle_{\cY} = \lim_{K
\to \infty} \langle x, \sum_{k=0}^{K} A^{*k} C^{*} \by(k)
\rangle_{\cX}  = \langle x, x_{o} \rangle_{\cX}
$$
and we conclude that $\by \in \cD(\bW_{o}^{*})$ with $\bW_{o}^{*} \by
= x_{o}$.  Conversely, suppose that $\by \in \cD(\bW_{o}^{*})$ with
$\bW_{o}^{*} \by = x_{o}$.  Then, for $x \in \cD(\bW_{o})$ we have
\begin{align*}
\langle x, x_{o} \rangle_{\cX}
&= \langle x, \bW_{o}^{*} \by\rangle_{\cX}
= \langle \bW_{o}x, \by \rangle_{\ell^{2}_{\cY}({\mathbb Z}_{+})}\\
&= \sum_{k=0}^{\infty} \langle C A^{k} x, \by(k)
\rangle_{\cY} = \lim_{K \to \infty} \langle x, \sum_{k=0}^{K} A^{*k}
C^{*} \by(k) \rangle_{\cX}
\end{align*}
and we conclude that the pair $\by, x_{o}$ is as in \eqref{limit-o}.
We have now verified the claimed characterization \eqref{limit-o}--\eqref{Wo*act} of $\cD(W_{o}^{*})$.

From this characterization we read off that $\im \bW_o^*  \supset \Obs(C|A)$.  Hence if we assume in addition that
$(C,A)$ is observable, we conclude that $\bW_o^*$ has dense range and (2) follows.
\end{proof}

Much more can be said about the observability and (adjoint) controllability operators in case the transfer function $F_\Si$ of $\Si$, given by \eqref{trans}, has an analytic continuation to a function in $H^\infty(\cU,\cY)$. We first collect a few observations about this case.

\begin{proposition}   \label{P:Hankel}
Suppose that $F(z) = \sum_{n=0}^{\infty} F_{n} z^{n}$ defines an
$H^{\infty}$-function on ${\mathbb D}$.   Then the following
statements hold:
\begin{enumerate}
    \item[(1)] The Hankel matrix
$$  \fH_{F} = [F_{i-j}]_{i\ge 0,\, j<0}
$$
defines a bounded operator from $\ell^{2}_{\cU}({\mathbb Z}_{-})$
into $\ell^{2}_{\cY}({\mathbb Z}_{+})$.

\item[(2)]
In case $F=F_\Si$ is the transfer function from a system $\Si$ as in
\eqref{dtsystem}
with system matrix $\sbm{A & B \\ C & D}$, then the Hankel matrix
$\fH_{F_{\Sigma}}$
is given by
\begin{equation}   \label{HankelSigma}
   \fH_{F_{\Sigma}}  = [C A^{i-j-1} B]_{i \ge 0,\, j<0}.
\end{equation}

\end{enumerate}
\end{proposition}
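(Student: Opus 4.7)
The plan is to handle the two parts separately; neither involves significant difficulty.

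For part (1), I would realize $\fH_F$ as a block of the multiplication operator $M_F \colon L^2_\cU(\BT) \to L^2_\cY(\BT)$ induced by the boundary values of $F$. Since $F$ has analytic continuation to a function in $H^\infty(\cU,\cY)$, $M_F$ is bounded with norm $\|F\|_\infty$. Identify $\ell^2_\cU(\BZ_-)$ and $\ell^2_\cY(\BZ_+)$ with the subspaces of $L^2_\cU(\BT)$ and $L^2_\cY(\BT)$ consisting of functions with Fourier support on $\BZ_-$ and $\BZ_+$ respectively. For $\bu \in \ell^2_\cU(\BZ_-)$ set $\hat\bu(z) = \sum_{j<0}\bu(j) z^j$. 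A direct convolution computation shows that, for each $i \ge 0$, the coefficient of $z^i$ in $F(z)\hat\bu(z)$ equals $\sum_{j<0} F_{i-j}\bu(j)$, which is exactly the $i$-th component of $\fH_F\bu$. Hence
\[
\|\fH_F\bu\|_{\ell^2_\cY(\BZ_+)} \le \|M_F\hat\bu\|_{L^2_\cY(\BT)} \le \|F\|_\infty \|\bu\|_{\ell^2_\cU(\BZ_-)},
\]
so $\fH_F$ is bounded with norm at most $\|F\|_\infty$.

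For part (2), I would expand the transfer function via the Neumann series
\[
F_\Si(z) = D + zC(I-zA)^{-1}B = D + \sum_{n=1}^\infty (CA^{n-1}B)\, z^n,
\]
valid in a neighborhood of $0$. Thus $F_n = CA^{n-1}B$ for $n \ge 1$, with $F_0 = D$. Since $i \ge 0$ and $j < 0$ force $i-j \ge 1$, the value $F_0 = D$ never enters, and the $(i,j)$-entry of $\fH_{F_\Si}$ is $F_{i-j} = CA^{i-j-1}B$, giving \eqref{HankelSigma}.

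The only substantive input is the identification of the Hankel matrix with a corner block of the multiplication operator determined by the $H^\infty$ symbol $F$; this is the standard route by which boundedness of the Hankel transfers from the $L^\infty$ bound on its symbol. Consequently I do not anticipate any serious obstacle in either part.
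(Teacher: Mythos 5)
Your proof is correct and follows essentially the same route as the paper: both realize $\fH_F$ as the corner block (compression to nonnegative Fourier modes of the image of the negatively supported modes) of the bounded multiplication operator $M_F$ on $L^2(\BT)$ induced by the $H^\infty$ symbol, the paper merely phrasing this on the sequence side via the Laurent matrix $\fL_F=[F_{i-j}]$ after the inverse $Z$-transform. Part (2) is the identical Neumann-series computation of the Taylor coefficients $F_n=CA^{n-1}B$ for $n\ge 1$.
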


\begin{proof}
Suppose that the function $F$ is a $\cL(\cU, \cY)$-valued
$H^{\infty}$-function on ${\mathbb D}$.
Then the multiplication operator
\begin{equation}  \label{multiplicationop}
M_{F} \colon u(z) \mapsto F(z) u(z)
\end{equation}
is bounded as an operator from $H^{2}_{\cU}({\mathbb D})$ to
$H^{2}_{\cY}({\mathbb D})$.  Moreover, the same formula
\eqref{multiplicationop} can be used to extend $M_{F}$ to a
a bounded operator from $L^{2}_{\cU}({\mathbb T})$ to
$L^{2}_{\cY}({\mathbb T})$ of the same norm, and this operator is
contractive in case $S \in \cS(\cU, \cY)$.  Let us define the {\em
Hankel
operator} ${\mathbb H}_{F} \colon H^{2}_{\cU}({\mathbb D})^{\perp}
\to H^{2}_{\cY}({\mathbb D})$ by
$$
  {\mathbb H}_{F} = P_{H^{2}_{\cY}({\mathbb D})}
  M_{F_{\Sigma}}|_{H^{2}_{\cY}({\mathbb D})^{\perp}}.
$$
As $ M_F$ is bounded (contractive in case $F \in \cS(\cU, \cY)$), it
follows that
also ${\mathbb H}_{F} $ is bounded (contractive in case $F \in
\cS(\cU, \cY)$).  We shall be interested in the inverse-$Z$-transform
version of these observations.

The inverse-$Z$-transform version of $M_{F}$ is given by the
biinfinite Laurent operator
\begin{equation}\label{Laurent'}
   \frakL_{F_\Si} = [ F_{i-j}]_{-\infty < i,j <
\infty}:\ell^2_\cU(\BZ)\to\ell^2_\cY(\BZ)
\end{equation}
where the Taylor series $F(z) = \sum_{n=0}^{\infty} F_{n} z^{n}$ for
$F$ determines $F_{n}$ for $n \ge 0$ and where we set $F_{n} = 0$ for
$n <
0$.  By the unitary property of the $Z$-transform from
$\ell^{2}({\mathbb Z})$ to
$L^{2}({\mathbb T})$, we see that $\fL_{F}$ has the same norm as
$M_{F}$ and hence is bounded (contractive in case $F \in \cS(\cU,
\cY)$) as an operator from $\ell^{2}_{\cU}({\mathbb Z})$ to
$\ell^{2}_{\cY}({\mathbb Z})$.   The inverse-$Z$-transform version of
the Hankel operator ${\mathbb H}_{F}$ is the time-domain version of
the Hankel operator $\fH_{F} \colon
\ell^{2}_{\cU}({\mathbb Z}_{-}) \to \ell^{2}_{\cY}({\mathbb Z}_{+})$
with matrix representation given by the southwest corner of the
Laurent matrix $\fL_{F}$:
$$
 \fH_{F} = [F_{i-j}]_{i \ge 0,\, j < 0}
$$
and trivially has norm bounded by the norm of $\fL_{F}$.
This completes the verification of statement (1) of
the proposition.

In case $F = F_{\Sigma}$ is the transfer function of a system as in
\eqref{dtsystem}
with system matrix  $\sbm{ A & B \\ C & D }$, so
$$
 F_{\Sigma}(z) = D + z C (I - zA)^{-1} B,
$$
then clearly
$F_{0} =  D$,
$F_{n} = C A^{n-1} B$ if  $n \ge 1$, and
$F_{n} = 0$ if  $n<0$,
%
and the Hankel matrix has the form
$$
  \fH_{F_{\Sigma}} = [ C A^{i-j-1}B]_{i \ge 0,\, j<0}
$$
and statement (2) of the proposition follows.
\end{proof}

The next proposition shows, among others, that the denseness conditions in items (2) and (4) of Proposition \ref{P:WcWo'} are automatically satisfied if $F_\Si$ has an analytic continuation to an $H^\infty$ function.

\begin{proposition}\label{P:WcWo}
Let $\Si$ be a discrete-time linear system as in
\eqref{dtsystem} with system matrix $M$ as in \eqref{sysmat}.
Assume that the transfer function $F_\Si$ defined by \eqref{trans}
has an analytic continuation to an $\cL(\cU, \cY)$-valued
$H^{\infty}$-function on ${\mathbb D}$.  Define $\bW_{o}$ and
$\bW_{c}^{*}$ as in
\eqref{bWo1}--\eqref{bWo2} and \eqref{bWc*1}--\eqref{bWc*2},
respectively. Then:
\begin{enumerate}
\item[(1)] The domain $\cD(\bW_o)$ of $\bW_o$ contains the reachability subspace $\Rea(A|B)$.
Thus, if $(A,B)$ is controllable, then $\cD(\bW_o)$ is dense in $\cX$.  If in addition $(C,A)$ is
observable, then $\bW_o$ is injective.

\item[(2)]  The domain $\cD(\bW_c^*)$ of the adjoint controllability operator $\bW_c^*$
contains the observability space $\Obs(C|A)$.  Hence, if
$(C,A)$ is observable, then $\cD(\bW_c^*)$ is dense in $\cX$.
If in addition $(A,B)$ is controllable, then $\bW_c^*$ is injective.
\end{enumerate}
\end{proposition}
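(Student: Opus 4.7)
The plan is to leverage the $H^\infty$ hypothesis on $F_\Si$, which by Proposition~\ref{P:Hankel} supplies a bounded Hankel operator
\[
\fH_{F_\Si}=[CA^{i-j-1}B]_{i\ge 0,\,j<0}\colon \ell^2_\cU(\BZ_-)\to\ell^2_\cY(\BZ_+).
\]
I will prove item~(1) directly and then deduce item~(2) by passing to the adjoint system $\Si^*$ of \eqref{dtsystem*}, whose transfer function $F_{\Si^*}(z)=F_\Si(\bar z)^*$ also lies in $H^\infty$ so that the same Hankel boundedness is available for $\Si^*$.

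For item~(1) the core observation is that evaluating $\fH_{F_\Si}$ on a suitably encoded finitely supported input reproduces precisely the observability sequence. Given $x\in\Rea(A|B)$, write $x=\sum_{k=0}^N A^k B u_k$ and define $\bu\in\ell_{\tu{fin},\cU}(\BZ_-)$ by $\bu(-k-1)=u_k$ for $0\le k\le N$ and $\bu(j)=0$ otherwise. The substitution $k=-j-1$ then gives
\[
(\fH_{F_\Si}\bu)(i)=\sum_{j<0}CA^{i-j-1}B\,\bu(j)=\sum_{k=0}^{N}CA^{i+k}B u_k=CA^i x\qquad(i\in\BZ_+).
\]
Boundedness of $\fH_{F_\Si}$ therefore forces $\{CA^i x\}_{i\in\BZ_+}\in\ell^2_\cY(\BZ_+)$, i.e.\ $x\in\cD(\bW_o)$, proving $\Rea(A|B)\subset\cD(\bW_o)$. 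Denseness of $\cD(\bW_o)$ under controllability of $(A,B)$ is then immediate. For injectivity, any $x\in\cD(\bW_o)$ with $\bW_o x=0$ satisfies $CA^n x=0$ for all $n\ge 0$, so $x\in\bigcap_{n\ge 0}\kr CA^n=\{0\}$ when $(C,A)$ is observable.

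For item~(2) I would apply item~(1) to $\Si^*$, using two painless identifications. First, by the re-indexing $m=-n-1$ in \eqref{bWc*1}--\eqref{bWc*2}, the operator $\bW_c^*$ attached to $\Si$ coincides verbatim with the observability operator of $\Si^*$. Second, by \eqref{ObsSpace} the reachability space of $\Si^*$ equals $\Rea(A^*|C^*)=\Obs(C|A)$. Item~(1) applied to $\Si^*$ therefore yields $\Obs(C|A)\subset\cD(\bW_c^*)$, denseness of $\cD(\bW_c^*)$ under observability of $(C,A)$, and injectivity of $\bW_c^*$ under the additional controllability hypothesis on $(A,B)$.

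I do not anticipate any genuine obstacle. The only mildly technical point is the index-shift identifying the Hankel's action on the constructed finitely supported input with the sequence $\{CA^i x\}_{i\ge 0}$; once that identity is in place, the proposition reduces to one application of the $H^\infty$/Hankel boundedness statement combined with the standard characterizations of (approximate) controllability and observability.
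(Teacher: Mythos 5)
Your proposal is correct and follows essentially the same route as the paper: both prove item (1) by observing that the bounded Hankel operator of Proposition~\ref{P:Hankel} applied to a finitely supported input encoding $x=\sum_k A^kBu_k$ produces exactly the sequence $\{CA^ix\}_{i\ge 0}$, forcing it into $\ell^2_\cY(\BZ_+)$, and both obtain item (2) by duality via the adjoint system \eqref{dtsystem*}. Your index computation and the identification of $\bW_c^*$ with the observability operator of $\Si^*$ are accurate, so nothing is missing.
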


\begin{proof}  Note that (2) follows from (1) by duality, so it suffices to consider (1).

We first show that $\cD(\bW_o)$ contains the reachability space.
 For this purpose, note that since $F_\Si$ is in $H^\infty$,
we know by Proposition \ref{P:Hankel} that the Hankel operator
$\fH_{F_\Si}$ defined by $F_\Si$ is a bounded operator from
$\ell^2_\cU(\BZ_-)$ into $\ell^2_\cY(\BZ_+)$. Let $\bu\in
\ell_{\tu{fin},\cU}(\BZ_-)$, say $\bu$ has support in the entries
indexed with $K,\ldots, -1$ and $\bu=\{\bu(k)\}_{k=K}^{-1}$. Then as a
consequence of the matrix representation \eqref{HankelSigma} for
$\fH_{F_{\Sigma}}$, we see that the action of $\fH_{F_{\Sigma}}$ on
$\bu$ can be arranged to have the form
\[
\fH_{F_\Si}\bu=\bW_o\left(\sum_{k=K}^{-1} A^{-1-k}B \bu(k)\right)\in
\ell^2_\cY(\BZ_+).
\]
Since $\bu\in \ell_{\tu{fin},\cU}(\BZ_-)$ was chosen arbitrarily it
follows that $\cD(\bW_o)$ contains all vectors from the reachability
space $\Rea (A|B)$. If we assume that $(A, B)$ is controllable, it then follows
that $\cD(\bW_o)$ is dense in $\cX$.

If $x \in \cD(\bW_o)$ is such that $\bW_o x = 0$, then $C A^n x = 0$ for all $n \in
{\mathbb Z}_+$.  If we assume that $(C,A)$ is observable, it now follows that $x = 0$,
i.e., it follows that $\bW_o$ is injective. This completes the proof of (1).
\end{proof}

The precise characterizations of $\cD(\bW_{o}^{*})$ and $\cD(\bW_{c})$
in Proposition \ref{P:WcWo'} enables us to pick up the following
useful corollary.  Recall  that
an operator $T$ is said to be {\em bounded below} in case there
exists a $\de>0$ so that
\begin{equation}  \label{boundedbelow}
\|T x\|\geq \de \|x\| \text{ for all } x\in\cD(T).
\end{equation}
We note that if $T$ is positive-definite, then $T$ being bounded below is equivalent to
$T^{-1}$ being bounded.

\begin{corollary} \label{C:HankDecs}
Assume that we are given a discrete-time linear system
\eqref{dtsystem} with associated transfer function $F_\Si$ {\rm(}possibly
after analytic continuation{\rm)} equal to an $H^{\infty}$-function on
${\mathbb D}$,
observability operator $\bW_{o}$ and adjoint controllability operator
$\bW_{c}^{*}$, and Hankel matrix $\fH_{F_{\Sigma}}$ as in
\eqref{trans}, \eqref{bWo1}, \eqref{bWo2}, \eqref{bWc*1},
\eqref{bWc*2}, \eqref{HankelSigma}.  Then:
\begin{enumerate}
    \item[(1)]
    Assume that $\cD(\bW_c^*)$ is dense in $\cX$ {\rm(}alternatively, by Proposition \ref{P:WcWo},
    assume that $(C,A)$ is observable{\rm)}.  Then
    $\cD(\bW_{o})$ contains $\im \bW_{c} = \bW_{c} \cD(\bW_{c})$ and
\begin{equation}  \label{HankDec1}
\fH_{F_{\Sigma}}|_{\cD(\bW_{c})} = \bW_{o} \bW_{c}.
\end{equation}
\item[(2)]
Assume that $\cD(\bW_o)$ is dense in $\cX$ {\rm(}alternatively, by Proposition \ref{P:WcWo}, assume that
$(A,B)$ is controllable{\rm)}.  Then
$\cD(\bW_{c}^{*})$ contains $\im \bW_{o}^{*} = \bW_{o}^{*}\cD(\bW_{o}^{*})$ and
\begin{equation}   \label{HankDec2}
\fH_{F_{\Sigma}}^{*}|_{\cD(\bW_{o}^{*})} = \bW_{c}^{*} \bW_{o}^{*}.
\end{equation}
\end{enumerate}
\end{corollary}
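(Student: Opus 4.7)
The plan is to establish (1) by a two-step approach --- verify the identity on the dense subspace $\ell_{\tu{fin},\cU}(\BZ_-)$ of finitely supported sequences by direct computation, and then extend to all of $\cD(\bW_c)$ using the weak-limit characterization of $\bW_c$ provided by Proposition \ref{P:WcWo'}(4). Part (2) is then proved by the same two-step strategy with the roles of $\bW_o$ and $\bW_c^*$, of $\cU$ and $\cY$, and of \eqref{limit-o} and \eqref{limit-c} interchanged (so that $\fH_{F_\Sigma}$ is replaced by $\fH_{F_\Sigma}^*$ and the inclusion $\Obs(C|A) \subseteq \cD(\bW_c^*)$ by the inclusion $\Rea(A|B) \subseteq \cD(\bW_o)$ from Proposition \ref{P:WcWo}(1)); we therefore focus on (1).

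For the base case, observe that any $\bu \in \ell_{\tu{fin},\cU}(\BZ_-)$ lies in $\cD(\bW_c)$ with $\bW_c \bu$ equal to the finite sum $\sum_{k<0} A^{-k-1} B \bu(k)$, so the explicit matrix form \eqref{HankelSigma} yields
\[
CA^n \bW_c \bu = \sum_{k<0} CA^{n-k-1} B \bu(k) = (\fH_{F_\Sigma} \bu)(n) \quad \text{for each } n \geq 0.
\]
Since $\fH_{F_\Sigma} \bu \in \ell^2_\cY(\BZ_+)$ by Proposition \ref{P:Hankel}(1), this gives $\bW_c \bu \in \cD(\bW_o)$ and $\bW_o \bW_c \bu = \fH_{F_\Sigma} \bu$. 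For a general $\bu \in \cD(\bW_c)$, let $\bu_K \in \ell_{\tu{fin},\cU}(\BZ_-)$ denote the truncation of $\bu$ to $\{-K, \ldots, -1\}$, so that $\bu_K \to \bu$ in $\ell^2_\cU(\BZ_-)$ and $\bW_c \bu_K$ coincides with the $K$-th partial sum in \eqref{limit-c}. The decisive input is that for any $y \in \cY$ and $n \geq 0$ the test vector $A^{*n} C^* y$ lies in $\Obs(C|A) \subseteq \cD(\bW_c^*)$ by Proposition \ref{P:WcWo}(2); combining the weak-limit formula \eqref{limit-c}, the base case, and continuity of $\fH_{F_\Sigma}$ followed by evaluation at $n$, one obtains
\begin{align*}
\langle y, CA^n \bW_c \bu\rangle_\cY
&= \langle A^{*n} C^* y, \bW_c \bu\rangle_\cX
= \lim_{K \to \infty} \langle A^{*n} C^* y, \bW_c \bu_K\rangle_\cX \\
&= \lim_{K \to \infty} \langle y, (\fH_{F_\Sigma} \bu_K)(n)\rangle_\cY
= \langle y, (\fH_{F_\Sigma} \bu)(n)\rangle_\cY.
\end{align*}
Since $y \in \cY$ and $n \geq 0$ were arbitrary, $CA^n \bW_c \bu = (\fH_{F_\Sigma} \bu)(n)$ for all $n$, whence $\{CA^n \bW_c \bu\}_{n \geq 0} = \fH_{F_\Sigma} \bu \in \ell^2_\cY(\BZ_+)$ --- that is, $\bW_c \bu \in \cD(\bW_o)$ with $\bW_o \bW_c \bu = \fH_{F_\Sigma} \bu$, as required.

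The main obstacle is that although $\bu_K \to \bu$ strongly in $\ell^2_\cU(\BZ_-)$, the images $\bW_c \bu_K$ need not converge strongly to $\bW_c \bu$ in $\cX$ (as $\bW_c$ is typically unbounded), so a naive attempt to pass to the limit via closedness of $\bW_o$ fails. The remedy is the coordinate-wise weak-convergence argument above, whose success hinges on having enough test vectors in $\cD(\bW_c^*)$ --- supplied precisely by the inclusion $\Obs(C|A) \subseteq \cD(\bW_c^*)$ furnished by Proposition \ref{P:WcWo}(2).
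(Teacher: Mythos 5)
Your proposal is correct and follows essentially the same route as the paper's proof: both verify the identity on finitely supported inputs via the matrix form \eqref{HankelSigma}, then pass to general $\bu \in \cD(\bW_{c})$ by testing against the vectors $A^{*n}C^{*}y \in \Obs(C|A) \subseteq \cD(\bW_{c}^{*})$, using the weak-limit characterization \eqref{limit-c} on one side and norm-continuity of the bounded Hankel operator on the other, and finally matching the two limits coordinatewise. The only difference is presentational (you isolate the finitely-supported case as an explicit base step, while the paper folds it into the identity $CA^{n}\bW_{c}\bu_{K} = \boldsymbol{\rm ev}_{n}\fH_{F_{\Sigma}}\bu_{K}$), and part (2) is handled by duality in both.
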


\begin{proof}
Suppose that $\bu \in \cD(\bW_{c})$, a dense subset of $\ell^2_\cU({\mathbb Z}_-)$
by Proposition \ref{P:WcWo'}.  Then from formula \eqref{limit-c} we see that
$x_{c} = \bW_{c} \bu$
    is determined by
 \begin{equation}   \label{limit-c'}
 \lim_{K \to \infty} \langle x, \bW_{c} \bu_{K} \rangle_{\cX} =
 \langle x, x_{c} \rangle_{\cX}
 \end{equation}
 for each $x$ in the dense (by assumption) subset $ \cD(\bW_{c}^{*})$, where we set
 $$
  \bu_{K}(k) = \begin{cases} \bu(k) &\text{if } k \ge  -K, \\
      0 &\text{otherwise.}
      \end{cases}
 $$
 In particular, by Proposition \ref{P:WcWo} we know that $\Obs(C|A)
 \subset \cD(\bW_{c}^{*})$ and hence, for any $y \in \cY$,
\eqref{limit-c'} holds with
 $A^{*n} C^{*} y$ in place of $x$.  This then leads us to
 $\lim_{K \to \infty}\langle y, C A^{n} \bW_{c} \bu_{K} \rangle_{\cY}
=
 \langle y, C A^{n} x_{c} \rangle_{\cY}$ for each $y \in \cY$,
 i.e., to
 \begin{equation}   \label{limit-c''}
    \underset{K \to \infty}{\operatorname{weak-lim}} \,  C A^{n}
\bW_{c}
    \bu_{K} = C A^{n} x_{c}.
 \end{equation}
 Note that $C A^{n} \bW_{c} \bu_{K} =
 \boldsymbol{\rm ev}_{n} \fH_{F_{\Sigma}} \bu_{K}$.  As
 $\fH_{F_{\Sigma}}$ is bounded  and
 $\lim_{K \to \infty} \bu_{K} = \bu$ in norm, it
 follows that $\lim_{K \to \infty} \boldsymbol{\rm ev}_{n}
 \fH_{F_{\Sigma}} \bu_{K} =  \boldsymbol{\rm ev}_{n}
\fH_{F_{\Sigma}} \bu$
 in the norm topology  of $\cY$ for each $n$.   On the other hand,
from \eqref{limit-c''}
 we see that $\lim_{K \to \infty} C A^{n} \bW_{c} \bu_{K} = C A^{n}
x_{c}$ in the weak
 topology of $\cY$. As norm convergence implies weak convergence,
 uniqueness of weak limits implies the equality $C A^{n} x_{c} =
 \boldsymbol{\rm ev}_{n} \fH_{F_{\Sigma}} \bu$.
  As this holds for all $n=0,1,2,\dots$, we
conclude that $\{ C A^{n} x_{c}\}_{n \ge 0} = \fH_{F_{\Sigma}} \bu$
is in
$\ell^{2}_{\cY}({\mathbb Z}_{+})$, i.e., $x_{c} \in \cD(\bW_{o})$ and
$\bW_{o} \bW_{c} \bu = \bW_{o} x_{c} = \fH_{F_{\Sigma}} \bu$.

The assertion for $\bW_{c}^{*} \bW_{o}^{*}$ follows by a dual
analysis.
\end{proof}

The next corollary list some useful consequences of $\ell^2$-exact controllability
and $\ell^2$-exact observability.

\begin{corollary}\label{C:ell2implics}
Let $\Si$ be a  discrete-time linear system as in
\eqref{dtsystem} with system matrix
$M$ as in \eqref{sysmat}. Assume that the transfer function $F_\Si$
defined by \eqref{trans}
has an analytic continuation to an $\cL(\cU, \cY)$-valued
$H^{\infty}$-function on ${\mathbb D}$.
\begin{itemize}
\item[(1)] If $\Sigma$ is $\ell^2$-exactly controllable, then $\bW_o$
is bounded.

\item[(2)] If $\Sigma$ is $\ell^2$-exactly observable, then $\bW_c$
is bounded.

\item[(3)] $\Sigma$ is $\ell^2$-exactly minimal, i.e., both
$\ell^2$-exactly controllable and $\ell^2$-exactly observable,
then $\bW_o$ and $\bW_c^*$ are both bounded and bounded below.

\end{itemize}
\end{corollary}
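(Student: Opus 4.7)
The plan is to reduce all three parts to (i) the Hankel-factorization identities in Corollary \ref{C:HankDecs}, (ii) the closed graph theorem, and (iii) the standard Hilbert-space fact that a bounded operator $T$ between Hilbert spaces is surjective if and only if $T^*$ is bounded below (an easy consequence of the open mapping theorem).

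For part (1), I would begin by noting that $\ell^2$-exact controllability supplies both the density of $\cD(\bW_c^*)$ in $\cX$ and the surjectivity $\bW_c\cD(\bW_c)=\cX$. The density hypothesis is exactly what is needed to apply Corollary \ref{C:HankDecs}(1), which gives $\im\bW_c\subseteq\cD(\bW_o)$. Combined with surjectivity of $\bW_c$, this upgrades to $\cD(\bW_o)=\cX$. Since $\bW_o$ is closed by Proposition \ref{P:WcWo'}(1), the closed graph theorem then forces $\bW_o$ to be a bounded operator on $\cX$.

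Part (2) I would handle by the symmetric dual argument: $\ell^2$-exact observability supplies density of $\cD(\bW_o)$ and surjectivity of $\bW_o^*$, so Corollary \ref{C:HankDecs}(2) yields $\cD(\bW_c^*)\supseteq\im\bW_o^*=\cX$. Together with closedness of $\bW_c^*$ from Proposition \ref{P:WcWo'}(3), the closed graph theorem makes $\bW_c^*$ bounded, and therefore $\bW_c=(\bW_c^*)^*$ is bounded as well.

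For part (3), parts (1) and (2) together already yield boundedness of all four operators $\bW_o,\,\bW_o^*,\,\bW_c,\,\bW_c^*$. For the ``bounded below'' conclusions I would invoke the surjectivity--adjoint dictionary mentioned above: since $\bW_o^*$ is now a bounded surjection (from $\ell^2$-exact observability), its adjoint $\bW_o$ is bounded below; since $\bW_c$ is a bounded surjection (from $\ell^2$-exact controllability), its adjoint $\bW_c^*$ is bounded below. I do not anticipate any real obstacle: the genuinely substantive work has been packaged inside Corollary \ref{C:HankDecs}, and the $H^\infty$-hypothesis enters only as the prerequisite ensuring that the Hankel factorization is available.
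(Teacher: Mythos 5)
Your proposal is correct and follows essentially the same route as the paper: Corollary \ref{C:HankDecs} plus the Closed Graph Theorem for parts (1) and (2), and surjectivity of $\bW_o^*$ and $\bW_c$ for the bounded-below claims in part (3). The only (inessential) difference is that in part (3) you invoke the surjectivity--bounded-below duality directly on $\bW_o^*$ and $\bW_c$, whereas the paper passes through the products $\bW_o^*\bW_o$ and $\bW_c\bW_c^*$ and the Open Mapping Theorem; both are standard and equivalent.
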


\begin{proof}
Assume that $\Sigma$ is $\ell^2$-exactly controllable.  In particular,
by the definition of $\ell^2$-exact controllability in the Introduction, $\cD(W_c^*)$ is
dense in $\cX$. Then the $\ell^2$-exact controllability hypothesis combined with item (1)
in Corollary  \ref{C:HankDecs} tells us that $\cD(\bW_{o})$ is the whole space
 $\cX$.  As $\bW_{o}$ is a closed operator (as verified in Proposition \ref{P:WcWo'}),
 it follows from the Closed Graph
 Theorem that $\bW_{o}$ is bounded. This verifies item (1) in
 Corollary \ref{C:ell2implics}.  Item (2) in Corollary
 \ref{C:ell2implics} follows  by the dual analysis.

 Next suppose that $\Sigma$ is $\ell^{2}$-exactly minimal, so we know
 that $\bW_{o}$ and $\bW_{c}$ are bounded by items (1) and (2) above.
  The $\ell^{2}$-exact-minimality hypothesis gives us that
$\im \bW_o^*=\cX$ and $\im \bW_c=\cX$.  Hence also $\bW_o^* \bW_o$ and
$\bW_c \bW_c^*$ are surjective.  From the fact that $\bW_o^*$ and $\bW_c$ are surjective,
it follows that $\bW_o$ and $\bW_c^*$ are injective, and hence also
$\bW_o^* \bW_o$ and $\bW_c \bW_c^*$ are injective.  It now follows from the Open Mapping Theorem
that $\bW_o^* \bW_o$ and $\bW_c \bW_c^*$ are bounded below, and hence also $\bW_o$ and $\bW_c^*$
are bounded below.
\end{proof}

A well-known case in which $\bW_o$ and $\bW_c$ are bounded is when the system matrix $M$ in \eqref{sysmat} is a contraction. In this case, as mentioned in the Introduction (see also \cite{HZ}),  the transfer function $F_\Si$ is a Schur class function.
For later use we record the following result.

\begin{proposition}\label{P:contractiveSM}
Let $\Si$ be the discrete-time linear system \eqref{dtsystem} with transfer function $F_\Si$ given by \eqref{trans}. Assume that the system matrix $M$ in \eqref{sysmat} is a contraction. Then $F_\Si$ is in the Schur class $\cS(\cU,\cY)$ and the controllability operator $\bW_{c}$ and observability operator $\bW_{o}$ are contraction operators with respective row- and column-matrix representations
$$
  \bW_{c} = \operatorname{row}_{j<0} [A^{-j-1} B]: \ell^{2}_{\cU}({\mathbb
Z}_{-}) \to \cX, \quad
  \bW_{o} = \operatorname{col}_{i\ge 0} [C A^{i}]: \cX \to \ell^{2}_{\cY}({\mathbb
Z}_{+})
$$
and furthermore provide a factorization of the Hankel operator $\fH_{F_{\Sigma}}$:
$$
  \fH_{F_{\Sigma}} = \bW_{o} \bW_{c}.
$$
\end{proposition}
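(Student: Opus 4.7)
The plan is to exploit the fact that contractivity of $M=\sbm{A&B\\C&D}$ gives the single fundamental energy inequality $A^*A+C^*C\preceq I$ (and dually $AA^*+BB^*\preceq I$), and then to telescope.

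First I would recall that the fact $F_\Sigma\in\cS(\cU,\cY)$ is already established in the Introduction by the feedback-system argument \eqref{syseq1}--\eqref{syseq2}; in particular $F_\Sigma\in H^\infty(\cU,\cY)$, so Proposition \ref{P:Hankel} and Corollary \ref{C:HankDecs} will be available once we control $\bW_o$ and $\bW_c$. Next I would prove boundedness of $\bW_o$. From $\|M\|\le 1$ we extract $A^*A+C^*C\preceq I$, which for any $x\in\cX$ and $n\ge 0$ yields $\|CA^n x\|^2\le \|A^n x\|^2-\|A^{n+1}x\|^2$. Summing from $n=0$ to $N$ the series telescopes to $\|x\|^2-\|A^{N+1}x\|^2\le\|x\|^2$, so $\sum_{n=0}^\infty \|CA^n x\|^2\le\|x\|^2$. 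This shows $\cD(\bW_o)=\cX$ and $\|\bW_o x\|\le\|x\|$; the column representation $\bW_o=\operatorname{col}_{i\ge 0}[CA^i]$ is immediate from the defining formula \eqref{bWo2}.

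For $\bW_c$ I would run the same argument on the adjoint system $\Sigma^*$ of \eqref{dtsystem*}, whose system matrix is $M^*$ and therefore also a contraction. Applying the result just proved for $\bW_o$ to $\Sigma^*$ shows that the observability operator of $\Sigma^*$, which is exactly $\bW_c^*$, is a contraction defined on all of $\cX$. Since $\bW_c^*$ is bounded, the closed operator $\bW_c$ (its adjoint) is also bounded with $\|\bW_c\|=\|\bW_c^*\|\le 1$, and in particular $\cD(\bW_c)=\ell^2_\cU(\BZ_-)$. To identify the row representation, I would use Proposition \ref{P:WcWo'}(4): on the dense subspace $\ell_{\tu{fin},\cU}(\BZ_-)$ of finitely supported sequences, the limit \eqref{limit-c} reduces to the finite sum $\sum_{k=-K}^{-1}A^{-k-1}B\,\bu(k)$, which is exactly the action of $\operatorname{row}_{j<0}[A^{-j-1}B]$; boundedness then lets me extend by continuity to all of $\ell^2_\cU(\BZ_-)$.

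Finally, for the Hankel factorization I would invoke Corollary \ref{C:HankDecs}(1). Its hypotheses are satisfied: $F_\Sigma\in H^\infty$, and $\cD(\bW_c^*)=\cX$ is trivially dense, so \eqref{HankDec1} gives $\fH_{F_\Sigma}|_{\cD(\bW_c)}=\bW_o\bW_c$. Since $\cD(\bW_c)$ is the whole space $\ell^2_\cU(\BZ_-)$, this is the desired identity $\fH_{F_\Sigma}=\bW_o\bW_c$. The only nontrivial step is the telescoping inequality of the second paragraph; everything else is either already in the paper or a routine extension-by-density, so I do not expect a serious obstacle.
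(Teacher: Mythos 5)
Your proof is correct and is essentially the paper's own argument: the paper likewise telescopes the contractivity inequality for a row/column of $M$ (it computes $\sum_{k=0}^N A^k BB^* A^{*k} \preceq I - A^{N+1}A^{*N+1} \preceq I$ from $AA^*+BB^*\preceq I$ to get $\|\bW_c\|\le 1$, with the $\bW_o$ bound obtained ``similarly'' from $A^*A+C^*C\preceq I$, and cites \cite{HZ} for Schur-class membership), which is the operator-inequality form of your spatial telescoping plus the adjoint-system dualization. Your write-up is in fact slightly more explicit than the paper's on the final step, since you spell out how the factorization $\fH_{F_\Sigma}=\bW_o\bW_c$ follows from Corollary \ref{C:HankDecs}(1) once $\cD(\bW_c)$ is all of $\ell^2_\cU(\BZ_-)$, whereas the paper leaves that to the reader.
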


\begin{proof}
In case the system matrix $\Sigma = \sbm{ A & B \\ C & D}$ is
contractive, then in particular the  row matrix $\begin{bmatrix} A &
B \end{bmatrix}$ is contractive so we have $A A^{*} + B B^{*} \preceq
I$.  Hence we have
\begin{align*}
    & \begin{bmatrix} A^{N}B & \cdots & AB & B \end{bmatrix}
\begin{bmatrix} B^{*} A^{*N} \\ \vdots \\ B^{*} A^{*} \\ B^{*}
\end{bmatrix} = \sum_{k=0}^{N} A^{k} B B^{*} A^{*k} \\
& \qquad\qquad \preceq
\sum_{k=0}^{N} A^{k}(I - A A^{*}) A^{*k} = I - A^{N+1} A^{* N+1}
\preceq I
\end{align*}
and hence
$$
\bW_{c} \bW_{c}^{*} = \operatorname{s-lim}_{N \to \infty}
\sum_{k=0}^{N} A^{k} B B^{*} A^{*k} \preceq I
$$
and it follows that $\| \bW_{c} \| \le 1$.  The proof that
$\bW_{o}^{*}
\bW_{o} \preceq I$ proceeds similarly making use of the fact that
$A^{*}A + C^{*} C \preceq I$, and statement (3) of the Proposition
follows.  As  observed in the Introduction, the result of \cite{HZ} tells us
(even for the nonrational case) that $F_\Sigma$
is a Schur-class function when $\| M \| \le 1$.
\end{proof}

We are now in position to sort out the connections among the notions
of controllable/exactly controllable/$\ell^{2}$-exactly controllable
and the dual notions of observable/exactly
observable/$\ell^{2}$-exactly observable.

\begin{proposition} \label{P:control-observe}
    Suppose that $\Sigma$ is a linear system with system matrix $M =
    \sbm{ A & B \\ C & D }$ as in \eqref{sysmat}.
    \begin{itemize}
\item[(1)]  It can happen that $(A,B)$ is exactly controllable but
not $\ell^{2}$-exactly controllable.

\item[(2)] It can happen that $(A,B)$ is $\ell^{2}$-exactly
controllable but not exactly controllable.

\item[(3)] If $(A,B)$ is exactly controllable, then $(A,B)$
is controllable.

\item[(4)] If $(A,B)$ is $\ell^{2}$-exactly controllable with $\cD(\bW_c^*) =\cX$
{\rm(}so $\bW_c^*$ is bounded and $\bW_c$ is not only bounded but also
surjective{\rm)}, then $(A,B)$ is controllable.

\item[(5)] If $(A, B)$ is exactly controllable and $\cD(\bW_c^*)$ is dense,
 then $(A,B)$ is $\ell^{2}$-exactly controllable.

\item[(6)]  It can happen that $(C,A)$ is exactly observable but not
$\ell^{2}$-exactly observable.

\item[(7)] It can happen that $(C,A)$ is $\ell^{2}$-exactly
observable but not exactly observable.

\item[(8)] If $(C,A)$ is exactly observable, then $(C,A)$ is
observable.

\item[(9)] If $(C,A)$ is $\ell^{2}$-exactly observable and
$\cD(\bW_o) = \cX$ {\rm(}so $\bW_o$ is bounded and $\bW_o^*$ is
not only bounded but also surjective{\rm)}, then $(C,A)$ is observable.

\item[(10)] If $(C,A)$ is exactly observable and $\cD(\bW_o)$ is dense,
then $(C,A)$ is $\ell^{2}$-exactly observable.
 \end{itemize}
\end{proposition}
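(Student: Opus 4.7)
The plan is to prove items (3), (4), (5), (8), (9), (10) from the characterizations
\[
\Rea(A|B) = \bW_c \ell_{\tu{fin},\cU}(\BZ_-), \qquad \Obs(C|A) = \bW_o^* \ell_{\tu{fin},\cY}(\BZ_+)
\]
supplied by Proposition \ref{P:WcWo'}, and to settle items (1), (2), (6), (7) by exhibiting explicit counterexamples. The ``observable'' items (6)--(10) will in each case reduce to the corresponding ``controllable'' items (1)--(5) by applying the latter to the adjoint system $\Si^*$ defined in \eqref{dtsystem*}.

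Items (3) and (8) are immediate, since $\cX = \Rea(A|B)$ (respectively $\cX = \Obs(C|A)$) is in particular dense. For item (4), the hypothesis $\cD(\bW_c^*) = \cX$ together with closedness of $\bW_c^*$ from Proposition \ref{P:WcWo'} and the Closed Graph Theorem forces $\bW_c^*$, and hence $\bW_c$, to be bounded; then since $\bW_c$ is bounded and surjective while $\ell_{\tu{fin},\cU}(\BZ_-)$ is dense in $\ell^2_\cU(\BZ_-)$, continuity yields
\[
\cX = \bW_c \ell^2_\cU(\BZ_-) \subseteq \ov{\bW_c \ell_{\tu{fin},\cU}(\BZ_-)} = \ov{\Rea(A|B)},
\]
proving (approximate) controllability. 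For item (5), denseness of $\cD(\bW_c^*)$ makes $\bW_c$ well defined with $\ell_{\tu{fin},\cU}(\BZ_-) \subseteq \cD(\bW_c)$, and exact controllability then gives $\im \bW_c \supseteq \bW_c \ell_{\tu{fin},\cU}(\BZ_-) = \Rea(A|B) = \cX$, i.e., $\bW_c$ is surjective.

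For the counterexamples I will use two simple scalar/shift realizations. For (1), take $\cU = \cX = \BC$ with $A = 2$, $B = 1$; then trivially $\Rea(A|B) = \BC$ while $\bW_c^* x = \{2^k x\}_{k \ge 0}$ lies in $\ell^2$ only for $x = 0$, so $\cD(\bW_c^*) = \{0\}$ fails to be dense and $(A,B)$ is not $\ell^2$-exactly controllable. For (2), take $\cX = \ell^2(\BZ_+)$, $\cU = \BC$, $A$ the forward shift $A e_n = e_{n+1}$, and $B\colon c \mapsto c\, e_0$; a direct computation gives $\bW_c \bu = \sum_{j \ge 0} \bu(-j-1)\, e_j$, which is a unitary identification of $\ell^2_\cU(\BZ_-)$ with $\cX$ (so $\bW_c$ is surjective), whereas $\Rea(A|B) = \tu{span}\{e_n : n \ge 0\}$ is dense but strictly smaller than $\cX$. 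Items (6) and (7) follow by taking adjoints of these realizations.

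The only subtlety worth highlighting is the distinction between $\cD(\bW_c^*)$ failing to be dense (so $\bW_c$ is not even definable and $\ell^2$-exact controllability fails at the outset) and $\bW_c$ being well defined but with $\bW_c \ell_{\tu{fin},\cU}(\BZ_-)$ strictly contained in $\im \bW_c$; it is precisely this difference that separates the three flavors of controllability, and the counterexamples above are engineered to exhibit each failure mode in isolation.
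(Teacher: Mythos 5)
Your proposal is correct and follows essentially the same route as the paper: the same two counterexamples (the scalar system $A=2$, $B=1$ for (1)/(6) and the forward shift for (2)/(7)), duality for (6)--(10), and the identification $\Rea(A|B)=\bW_c\,\ell_{\tu{fin},\cU}(\BZ_-)$ for (3)--(5). The only divergence is in item (4), where the paper argues that $\im\bW_c$ lies in the weak closure of $\Rea(A|B)$ and invokes the coincidence of weak and norm closures for convex sets, whereas you first get boundedness of $\bW_c^*$ (hence of $\bW_c$) from the Closed Graph Theorem and then use norm-continuity on the dense set of finitely supported inputs --- an equally valid and slightly more direct argument.
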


\begin{proof}
As items (6)--(10) are just dual versions of
items  (1)--(5), we need only prove (1)--(5).

\smallskip

\paragraph{\bf (1):}
    Take $\cX = \cU = {\mathbb C}$ with $A = [2]$ and $B = [1]$. Then $\bW_{c}^{*}$ defined by
    \eqref{bWc*1}--\eqref{bWc*2} has domain $\cD(\bW_{c}^{*})$ equal
    to the zero space, so in particular is not dense in $\cX$. Then, according to our definition,
    $(A,B)$ is not $\ell^2$-exactly controllable.   Nevertheless it is
   clear that the
    input pair $(A,B)$ is exactly controllable.

    To remedy this situation we may attempt instead to use the formulas
    \eqref{limit-c}--\eqref{Wc-act} to define a controllability
    operator $\widetilde \bW_{c}$; however, for our example $A=[2]$,
$B = [1]$, the resulting $\widetilde \bW_{c}$ is not closed or even
    closable.  Note if we choose $C$ so that $(C,A)$ is observable
    (e.g., $C = [1]$), the resulting transfer function $F_{\Sigma}(z)
    = \frac{z}{1 - 2 z}$ does not have analytic continuation to an
    $H^{\infty}$-function on ${\mathbb D}$.  This example illustrates
    the crucial role of the hypotheses that $\Sigma$ have an
    $H^{\infty}$-transfer function in Proposition \ref{P:WcWo}.

    \smallskip

\paragraph{\bf (2):}
    Take $\cX = \ell^{2}({\mathbb Z}_{+})$, $\cU = {\mathbb C}$ with
 $A$ equal to the forward shift operator and $B$ equal to the
 injection of ${\mathbb C}$ into the first slot of $\ell^{2}({\mathbb
 Z}_{+})$:
$$
A = \sbm{0 & 0 & 0 & \cdots \\ 1 & 0 & 0 & \cdots \\
0 & 1 & 0 & \cdots \\  & & \ddots &  &},
\quad B = \sbm{ 1 \\ 0 \\  \vdots }.
$$
One easily computes that $\bW_c^*$ is the identity operator on
$\ell^{2}({\mathbb Z}_{+})$, hence in particular with dense domain equal to the
whole space. Thus
$\bW_c=I_{\ell^{2}({\mathbb Z}_{+})}$ is bounded. It is also clear
that $\Rea (A|B)=\bW_c \ell_{\tu{fin}}(\BZ_+)=I_{\ell^{2}({\mathbb
Z}_{+})}\ell_{\tu{fin}}(\BZ_+)=\ell_{\tu{fin}}(\BZ_+)\neq \cX$.

\smallskip

\paragraph{\bf (3):}
If $(A,B)$ is exactly controllable, then $\Rea (A | B) = \cX$; thus
trivially $\Rea (A | B)$ is dense in $\cX$, i.e., $(A, B)$ is
controllable.

\smallskip

\paragraph{\bf (4):}
We now assume that $(A,B)$ is exactly $\ell^{2}$-controllable with $\cD(\bW_c^*) = \cX$.
Then  the domain of $\bW_c$ is determined by \eqref{limit-c} where $x$ can be taken to be
an arbitrary vector in $\cX$, i.e., for $\bu \in \ell^2_\cU({\mathbb Z}_-)$,
$$
x_c := \bW_c^* \bu  =  \operatorname{weak-lim}_{K\to
\infty} \sum_{k=-K}^{-1} A^{-k-1} B \bu(k)
$$
Note that each approximant $\sum_{k=-K}^{-1} A^{-k-1} B \bu(k)$ of $x_c$ is in the
reachability space $\Rea (A | B)$.  We conclude that $\im \bW_{c}$ is
contained in the weak-closure of $\Rea (A | B)$. But a consequence
of the Hahn-Banach Theorem is that weak and norm
closure are the same on convex sets (in particular on linear subsets);
hence $\im \bW_{c}$ is contained in the norm-closure of $\Rea (A |
B)$.  The $\ell^{2}$-exact controllability hypothesis now gives us
that $\Rea (A | B)$ is norm-dense in $\cX$, i.e., $(A,B)$ is
controllable.  This verifies item (4).

\smallskip

\paragraph{\bf (5):}
We now assume instead that the pair $(A,B)$ is exactly controllable and
that $\cD(\bW_c^*)$ is dense in $\cX$.  We may then apply Proposition
\ref{P:WcWo'} to see that $\bW_c$ is given by \eqref{limit-c}--\eqref{Wc-act}.
In particular, any $\bu \in \ell^2_{\tu{fin}, \cU}({\mathbb Z}_-)$ is in $\cD(\bW_c)$ with
$\bW_c \bu = \sum_{k \in {\mathbb Z}_-} A^{-k-1} B \bu(k)$ (where the sum is
finite).  This shows that $\im \bW_c \supset \Rea(A|B)$.  The exact controllability
hypothesis now implies that $\im \bW_c = \cX$, i.e., that $(A, B)$ is
$\ell^2$-exactly controllable.
\end{proof}

\section{Infinite-dimensional state-space-similarity theorems}
\label{S:SSS}

The goal of this section is to prove Theorem \ref{T:SSS}.

\begin{proof}[Proof of Theorem \ref{T:SSS}]
The proof is given in four steps. We first prove the sufficiency directions in items (1)-(3), after which we prove the necessity directions in three separate steps.

\paragraph{\em Proof of sufficiency in items {\rm(1)-(3)}}
We first consider the sufficiency direction:  {\em  $\Sigma$ and $\Sigma'$ (pseudo-)similar $\Rightarrow$
$F_\Sigma(\lambda) = F_{\Sigma'}(\lambda)$
in a neighborhood of the origin.} Note that the equality
$F_\Sigma(\lambda) = F_{\Sigma'}(\lambda)$ in a neighborhood of the origin
is the same as matching of Taylor coefficients at the origin:
\begin{equation}  \label{momentseq}
D' = D    \text { and }  C' A^{\prime n} B'  = C A^n B \text{ for } n=0,1,2,\dots.
\end{equation}
Note also that similarity-equivalence between $\Sigma$ and $\Sigma'$ is a particular kind of pseudo-similarity
equivalence.  Hence. to  prove the sufficiency direction in items (1), (2), (3) of Theorem \ref{T:SSS},
it suffices to show: {\em  if $\Gamma$ is a closed, densely defined operator with dense range satisfying conditions
\eqref{pseudosim}, then conditions \eqref{momentseq} hold.}

Toward this end, note first that the condition $D' = D$ is part of the conditions \eqref{pseudosim} (pseudo-similarity
equivalence between $\Sigma$ and $\Sigma'$).  As for the remaining conditions in \eqref{momentseq},  use
the relations in \eqref{pseudosim} to compute
$$
C' A^{\prime n}B' = C' A^{\prime n} \Gamma B = C' \Gamma A^n B = C A^n B
$$
as needed.

\paragraph{\em Proof of necessity in item {\rm(1)}:}
Item (1) has already been worked out in the literature (see \cite{Helton74, BC, Arov79a}), so we only give a sketch.
We suppose that we are given two minimal systems $\Sigma$ and $\Sigma'$ with respective system matrices
$M = \sbm{ A & B \\ C & D}$ and $M' = \sbm{ A' & B' \\ C' & D' }$ with conditions \eqref{momentseq} holding.
We must construct an injective, closed, densely defined operator $\Gamma$ with dense range so that \eqref{pseudosim} holds.
Toward this end, we attempt to define an operator $\Gamma_0$ from $\Rea(A|B)$ to $\Rea(A'|B')$ by
\begin{equation}  \label{defGamma0}
  \Gamma_0 \colon \sum_{k=0}^n A^k B u_k  \mapsto \sum_{k=0}^n A^{\prime k} B' u_k.
\end{equation}
One can use the observability of the pair $(C', A')$ to see that the formula for $\Gamma_0$ is well-defined
and observability of the pair $(C,A)$ to see that the resulting well-defined linear transformation
$\Gamma_0$ is injective.  Furthermore,
controllability of the pair $(A,B)$ implies that $\Gamma_0$ has dense domain and controllability
of the pair $(A',B')$ implies that $\Gamma_0$ has dense range $\im \Gamma_0$ in $\cX'$.
A mild limit enhancement of these computations shows that moreover $\Gamma_0$ is closable with closure $\Gamma$
also injective with dense range.

From the definition \eqref{defGamma0} of the action of $\Gamma_0$, it is clear that $\Gamma B = B'$ and that
$\Gamma A x = A' \Gamma x$ if $x \in \Rea(A|B)$.  A limit enhancement of this same argument then shows that
$\Gamma A x = A' \Gamma x$ for any $x \in \cD(\Gamma)$.  Similarly, application of the operator $C$ to an element
$x = \sum_{k=0}^n A^k B u_k$ combined with the equality of Taylor coefficients \eqref{momentseq} and the definition
\eqref{defGamma0} of the action of $\Gamma_0$ yields the identity $C x = C' \Gamma x$ for $x \in \Rea(A|B)$.
A limit enhancement of this argument then gives the equality $C x = C' \Gamma x$ for a general $x$ in
$\cD(\Gamma)$.  We conclude that $\Gamma$ implements a pseudo-similarity equivalence between $\Sigma$ and $\Sigma'$ as wanted.

\paragraph{\em Proof of necessity in item {\rm(2)}:}  In this case we are given that $\Sigma$ is exactly minimal while $\Sigma'$
 is minimal such that relations  \eqref{momentseq} hold. Trivially, $\Sigma$ then is also minimal.
 The work in the immediately preceding proof (necessity in item (1))
 then tells us that the operator $\Gamma_0$ defined on $\Rea(A|B)$ by \eqref{defGamma0} is well-defined and injective with dense range, and moreover is closable.  The exact minimality hypothesis on $\Sigma$ means in particular that
 $(A,B)$ is exactly controllable, i.e., that the reachability space $\Rea(A|B)$ is the whole space $\cX$.
 Hence, the closability of $\Gamma_0$ just means that $\Gamma_0$ is a closed operator with domain equal to
 the whole space $\cX$.  The Closed Graph Theorem then implies that $\Gamma_0$ is bounded as an operator from
 $\cX$ to $\cX'$.  Moreover, by the work in the proof for item (1) above, we know that $\Gamma_0$ satisfies all the
 relations in \eqref{pseudosim}.  It remains only to show that $\Gamma_0$ is surjective.  It then follows that $\Gamma$
 has a bounded inverse by the Open Mapping Theorem.

 Toward this end, we view $M^* = \sbm{ A^* & C^* \\ B^* & D^* }$ as the system matrix for a linear system $\Sigma^*$
 and  similarly $M^{\prime *} = \sbm{ A^{\prime *} & C^{\prime *} \\ B^{\prime *} & D^*}  $ as a system matrix for a linear system $\Sigma^{\prime *}$.  Note that
$$
F_{\Sigma^*}(\lambda) = F_{\Sigma}(\overline{\lambda})^* = F_{\Sigma'}(\overline{\lambda})^* =
F_{\Sigma^{\prime *}}(\lambda)
$$
so $\Sigma^*$ and $\Sigma^{\prime *}$ have identical transfer functions in a neighborhood of the origin, and hence
$$ D^{\prime *} = D^* \text{ and } B^{\prime *} A^{\prime *n} C^{\prime *} = B^* A^{*n} C^* \text{ for } n=0,1,2,\dots
$$
(just the adjoint versions of the relations   \eqref{momentseq}).  Moreover, the $\ell^2$-exact observability of $\Sigma$
implies that $\Sigma^*$ is $\ell^2$-exactly controllable and the observability of $\Sigma'$ implies that
$\Sigma^{\prime *}$ is controllable.  We may then repeat the preceding argument but applied to the pair $(\Sigma^*,
\Sigma^{\prime *})$ in place of the pair $(\Sigma, \Sigma')$.  We conclude that there is a well-defined bounded linear operator $\widetilde \Gamma$ from $\cX$ to $\cX'$ uniquely determined by its action on vectors $x$ of the form
$x = \sum_{k=0}^n A^{*k} C^* y_k$:
$$
\widetilde \Gamma \colon \sum_{k=0}^n A^{*k} C^* y_k \mapsto \sum_{k=0}^n A^{\prime * k} C^{\prime *} y_k,
$$
which in addition satisfies the intertwining relations:
$$
\widetilde \Gamma A^* = A^{\prime *} \widetilde \Gamma, \quad
\widetilde \Gamma C^* = C^{\prime *}, \quad B^{\prime *} \widetilde \Gamma = B^*.
$$
In other words, $\widetilde \Gamma^*$ satisfies
$$
A \widetilde \Gamma^* = \widetilde \Gamma^* A', \quad C \widetilde \Gamma^* = C',
\quad \widetilde \Gamma^* B' = B.
$$
A consequence of these relations is that
$$
  \widetilde \Gamma^* \colon \sum_{k=0}^n A^{\prime k} B' u_k = \sum_{k=0}^n A^k B u_k
$$
for any choice of $u_k \in \cU$, $k = 0,1, \dots, n$.  This implies that $\widetilde \Gamma^* \Gamma x = x$
for all $x \in \Rea(A|B) = \cX$.  Thus $\widetilde \Gamma^*$ is a bounded left inverse of $\Gamma$.

We use this last observation to see that $\im \Ga$ is closed as follows.  If
$x'_{n} = \Gamma x_{n}$ is a sequence of elements of $\im
\Ga$ converging to $ x' \in  \cX'$, then
$x_{n} = \widetilde \Ga^{*} \Ga x_{n} \to \widetilde \Ga^{*}
x' \in \cX$ as $n \to \infty$.  Since $\Ga$ is bounded, we
conclude that
$$
x' = \lim_{n \to \infty} \Gamma x_{n}
= \Gamma (\lim_{n \to \infty} x_{n}) = \Gamma \widetilde \Gamma^{*}
x'
\in \im \Gamma
$$
and we conclude that $\im \Gamma$ is closed as claimed. As $\im
\Gamma$ is also dense due the assumed controllability of the pair
$(A', B')$, it follows that $\im \Gamma$ is the whole space $\cX'$
and in fact that $\widetilde \Gamma^{*}$ is a two-sided bounded
inverse for
$\Gamma$, as needed to complete the proof.

\paragraph{\em Proof of necessity in item {\rm(3)}:}  We are now given that $\Sigma$ is exactly $\ell^2$-minimal while $\Sigma'$
is assumed to be  minimal with bounded controllability and
observability operators $\bW'_c$ and $\bW'_o$  and furthermore the relations \eqref{momentseq} hold.
We must produce a bounded, boundedly invertible operator $\Gamma \colon \cX \to \cX'$ so that the relations
\eqref{pseudosim} hold.

By Corollary \ref{C:ell2implics}, the operators $\bW_o$ and $\bW_c^*$
are bounded operators which are also bounded below. In particular, $\bW_o$  admits a bounded
generalized left-inverse $\bW_o^\dagger$ and $\bW_c$ admits a bounded generalized right-inverse $\bW_c^\dagger$, i.e.,
\begin{equation}  \label{gen-inv}
\bW_o^\dagger\bW_o=I_\cX, \qquad \bW_c\bW_c^\dagger=I_\cX.
\end{equation}
In addition we may choose $\bW_o^\dagger$ and $\bW_c^\dagger$ to be the Moore-Penrose generalized inverses;
this means that in addition to \eqref{gen-inv} we have
\begin{equation}  \label{MoorePenrose}
\bW_o \bW_o^\dagger = P_{ \im \bW_o}, \quad
\bW_c^\dagger \bW_c = P_{(\ker \bW_c)^\perp}
\end{equation}
where in general $P_\cN$ indicates the orthogonal projection onto the subspace $\cN$.
Furthermore, from the fact that $\bW_c$ and $\bW_o$ are bounded, we see that $\fH_{F_\Sigma} = \bW_o \bW_c$ is
bounded and similarly $\fH_{F_{\Sigma'}} = \bW'_o \bW'_c$.  From the assumption that $F_\Sigma = F_{\Sigma'}$ in a
neighborhood of the origin, it follows that $\fH_{F_\Sigma} = \fH_{F_{\Sigma'}}$ and hence
\begin{equation}   \label{Hankel-id}
  \bW_o \bW_c = \bW'_o \bW'_c.
\end{equation}
A consequence of this property combined with the observability of the output pair $(C', A')$ (i.e., the
injectivity of the operator $\bW'_o$) is the fact that
\begin{equation}   \label{key}
   \bW'_c|_{\ker \bW_c} = 0.
\end{equation}

Let us define $\Gamma \colon \cX \to \cX'$ by
\begin{equation}   \label{defGamma}
   \Gamma = \bW'_c \bW_c^\dagger.
\end{equation}
As both $\bW'_c$ and $\bW_c^\dagger$ are bounded, we see that $\Gamma$ is a bounded operator.

We next use \eqref{MoorePenrose} and \eqref{key} to check that
\begin{equation}  \label{Gamma-prop}
  \Gamma \bW_c = \bW'_c
\end{equation}
as follows:
$$
 \Gamma \bW_c = \bW'_c \bW_c^\dagger \bW_c = \bW'_c P_{(\ker \bW_c)^\perp} = \bW'_c.
$$

Set $\widetilde \Gamma = \bW_o^\dagger  \bW'_o$.
Let us check that $\widetilde \Gamma$ is a left inverse for $\Gamma$:
\begin{align*}
  \widetilde \Gamma \Gamma & = \bW_o^\dagger \bW'_o \bW'_c \bW_c^\dagger  \\
 & =  \bW_o^\dagger \bW_o \bW_c \bW_c^\dagger \quad \text{ by \eqref{Hankel-id}} \\
 & = I_\cX\quad \text{ (by \eqref{gen-inv}).}
  \end{align*}
Note next that $\Gamma$ has dense range by the assumed controllability of the system $\Sigma'$ and the relation
 \eqref{Gamma-prop}.

To show that $\widetilde \Gamma$  is a two-sided inverse for $\Gamma$, it suffices to show that
$\im \Gamma$ is closed.  As $\widetilde \Gamma$ is a bounded left inverse for $\Gamma$, this follows by exactly
the same argument as used at the end of the proof of the sufficiently in item (2) given immediately above.

To verify that $\Gamma$ implements a similarity equivalence between $\Sigma$ and $\Sigma'$, it now
remains only to verify the intertwining conditions \eqref{pseudosim}.   Toward this end, let us point out that it is easily
verified from the definitions that  the following intertwining condition holds:
$$
\bW_c^* A^*  = \cS_- \bW_c^*
$$
where $\cS_-$ is the truncated right shift operator on $\ell^2_\cU({\mathbb Z}_-)$.
Taking adjoints then gives us
$$
  \bW_c  \cS_-^* = A \bW_c
 $$
 where $\cS_-^*$ is the (untruncated) backward shift operator on $\ell^2_\cU({\mathbb Z}_-)$.
 Making use of \eqref{Gamma-prop} we then get
 $$
 \Gamma A \bW_c = \Gamma \bW_c \cS_-^* = \bW'_c \cS_-^* = A' \bW'_c = A' \Gamma \bW_c
 $$
 and we arrive at  the first intertwining condition in \eqref{pseudosim}:
 $$
  \Gamma A  = A' \Gamma.
 $$

To verify the second intertwining condition ($\Gamma B = B'$) in \eqref{pseudosim}, observe that $Bu
= \bW_c \bu$ where $\bu(-1) = u$ an $\bu(k) = 0$ for $k < -1$.  Hence
$$
  \Gamma B u = \Gamma \bW_c \bu = \bW'_c \bu = B' u
 $$
 as wanted.  To see the last intertwining condition ($C' \Gamma = C$), simply note first that,
 for any $\bu \in \ell^2_{{\rm fin}, \cU}({\mathbb Z}_-)$,  as a consequence of the identities
 \eqref{momentseq} we have
 $$
  C' \bW'_c  \bu = C' \left(\sum_{k=-1}^{-K} A^{\prime -k-1} B' u(k)\right) =
  C \left(\sum_{k=1}^{-k} A^{-k-1} B' u(k) \right) = C \bW_c \bu.
  $$
  By approximating an arbitrary $\bu \in \ell^2_\cU({\mathbb Z}_-)$ by input signals of finite support and taking
  limits,  we arrive at the general operator identity
  $$
  C' \bW'_c = C \bW_c.
  $$
  Hence, by combining this with the identity \eqref{Gamma-prop} we can compute
 $$
  C' \Gamma \bW_c = C' \bW'_c = C \bW_c
  $$
  and arrive at the last of the intertwining relations \eqref{pseudosim} as wanted.
  The completes the proof of necessity in item (3) of Theorem \ref{T:SSS}.
  \end{proof}

  \begin{remark}  \label{R:sim-vs-pseudosim} {\em Similarity versus Pseudo-similarity.}
  The result of the sufficiency side in Theorem \ref{T:SSS} is that the existence of a similarity or even only
  pseudo-similarity transform from $\Sigma$ to $\Sigma'$  is enough to ensure that $F_\Sigma(\lambda)
  = F_{\Sigma'}(\lambda)$ for $\lambda$ in a neighborhood of the origin.    It can easily be checked that the existence
  of a similarity transform from $\Sigma$ to $\Sigma'$ (in the sense used in Theorem \ref{T:SSS}) preserves most other
  system-theoretic properties which we have discussed so far, namely:  exponential stability;  controllability,
  exact controllability, $\ell^2$-exact controllability; and hence also by duality observability, exact observability,
  $\ell^2$-exact observability; and therefore also minimality, exact minimality, and $\ell^2$-exact minimality.
  On the other hand, identifying which properties are preserved under pseudo-similarity equivalence is much more
  delicate.  For example, it is possible to produce an exponentially stable state operator $A$ which is pseudo-similar
  to a state operator $A'$ which is not exponentially stable (see \cite[Section 2.7]{AKP06}).
  If $\Gamma$ is a pseudo-similarity from $\Sigma \sim (A,B,C,D)$ to $\Sigma' \sim (A', B', C', D')$ and
  $(A,B)$ is controllable,  then one can show that $(A',B')$ is again controllable if one
  imposes the additional hypothesis that
  $\Rea(A|B) \subset \cD(\Gamma)$ is a {\em core} for $\Gamma$, i.e., given $x \in \cD(\Gamma)$, there
  exists a sequence $\{ x_n \}$ contained in $\Rea(A|B)$ so that $x_n \to x$ and $\Gamma x_n \to \Gamma x$ as
  $n \to \infty$.   This same hypothesis that $\Rea(A|B)$ be a core for $\Gamma$ (or equivalently for $H^\half =
  (\Gamma^* \Gamma)^\half$) comes up in \cite{AKP06} in the discussion of characterization of maximal and minimal
  solutions of the KYP-inequality.
   \end{remark}

  \section{Infinite-dimensional standard bounded real lemmas}
\label{S:infstanBRL}

In this section we prove Theorem \ref{T:BRLinfstan}. The following lemma connects (generalized) solutions to the KYP inequality to (pseudo) similarity. Note that no minimality condition is assumed.

\begin{lemma}\label{L:SSSvsKYP}
Let $\Si$ be a discrete-time linear system as in \eqref{dtsystem} with system matrix $M$ as in \eqref{sysmat} and transfer function $F_\Si$ defined by \eqref{trans}. Then:
\begin{itemize}
  \item[(1)] $\Si$ is similar to a contractive system if and only if there exists a bounded, strictly positive-definite solution to the KYP inequality \eqref{KYP1}.

  \item[(2)] $\Si$ is pseudo-similar to a contractive system if and only if there exists a generalized positive-definite solution to the spatial KYP inequality \eqref{KYP1b}.

\end{itemize}
\end{lemma}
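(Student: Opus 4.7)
The plan is to exploit the identification $H = \Gamma^*\Gamma$ (equivalently $\Gamma = H^{\half}$ up to a unitary factor) as the bridge between the KYP inequality and (pseudo-)similarity. In either direction of either item, the translation between contractivity of a transformed system matrix and the KYP inequality hinges on the spatial identity $\|H^{\half}x\|^2 = \|\Gamma x\|^2$ holding on the common domain $\cD(H^{\half}) = \cD(\Gamma)$; this equality is supplied by the polar decomposition of a closed, densely defined operator. Once this identification is in place, both (1) and (2) reduce to routine unravelling of the pseudo-similarity relations \eqref{pseudosim}, except for one analytical subtlety in the converse of (2) which I flag at the end.

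For the forward direction of both (1) and (2), I would assume we are given a (pseudo-)similarity $\Gamma \colon \cX \to \cX'$ together with a contractive target system matrix $\widetilde M = \sbm{A' & B' \\ C' & D'}$, and set $H = \Gamma^*\Gamma$. The properties required of $H$ in item (2) (positive-definite, injective, closed, densely defined, $A \cD(H^{\half}) \subset \cD(H^{\half})$, $B\cU \subset \cD(H^{\half})$) follow directly from the corresponding properties of $\Gamma$ via the polar decomposition together with the intertwining relations \eqref{pseudosim}; in item (1), where $\Gamma$ is bounded and boundedly invertible, these reduce to boundedness and strict positive-definiteness of $H = \Gamma^*\Gamma \succeq \|\Gamma^{-1}\|^{-2} I$. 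The spatial KYP inequality \eqref{KYP1b} (respectively the operator KYP \eqref{KYP1}) then falls out of contractivity of $\widetilde M$ after the substitution $x' = \Gamma x$ and use of $\Gamma A|_{\cD(\Gamma)} = A'\Gamma$, $\Gamma B = B'$, $C|_{\cD(\Gamma)} = C'\Gamma$ and $D' = D$.

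For the converse, given such an $H$, I would set $\Gamma = H^{\half}$ and take $\cX' = \cX$, so that $\Gamma$ is a closed, injective, densely defined operator from $\cX$ into itself with dense range $\ran H^{\half}$, by selfadjointness and injectivity of $H^{\half}$. I would then define $A'$ and $C'$ initially on the dense subspace $\ran H^{\half}$ by
\begin{equation*}
A'(H^{\half} x) = H^{\half}(Ax), \qquad C'(H^{\half} x) = Cx \qquad (x \in \cD(H^{\half})),
\end{equation*}
and set $B' = H^{\half} B$ and $D' = D$. Well-definedness of $A'$ and $C'$ on $\ran H^{\half}$ is the main technical point; it follows from injectivity of $H^{\half}$, since $H^{\half}x = H^{\half}y$ forces $x=y$. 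Specialising the spatial KYP inequality \eqref{KYP1b} to $u=0$ and to $x=0$ respectively yields the estimates
\begin{equation*}
\|H^{\half}Ax\|^2 + \|Cx\|^2 \le \|H^{\half}x\|^2, \qquad \|H^{\half}Bu\|^2 + \|Du\|^2 \le \|u\|^2,
\end{equation*}
which give contractivity of $A'$ and $C'$ on $\ran H^{\half}$ (permitting bounded extension by density to $\cX = \overline{\ran H^{\half}}$) and boundedness of $B'$. The full spatial KYP inequality then says exactly that $\widetilde M = \sbm{A' & B' \\ C' & D}$ is contractive on the dense subspace $\ran H^{\half} \oplus \cU$, hence on all of $\cX \oplus \cU$. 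The relations \eqref{pseudosim} hold by construction, giving pseudo-similarity in item (2); in item (1), boundedness of $H^{-\half}$ further implies that $\Gamma = H^{\half}$ is boundedly invertible, upgrading the pseudo-similarity to a genuine similarity and completing the proof.
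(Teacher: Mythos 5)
Your proposal is correct and follows essentially the same route as the paper: both directions hinge on the identification $H = \Gamma^*\Gamma$ (with the polar decomposition supplying $\|H^{\half}x\| = \|\Gamma x\|$ in the unbounded case), and your converse constructs the contractive system on $\ran H^{\half}\oplus\cU$ exactly as in the paper's definition \eqref{A'B'C'D'}, using injectivity of $H^{\half}$ for well-definedness and density of $\ran H^{\half}$ for the contractive extension. No gaps.
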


\begin{proof}
We begin with a proof of item (1).
Let $H$ be a bounded strictly pos\-itive-definite solution of the KYP-inequality \eqref{KYP1}. This is equivalent to the system matrix of the discrete-time linear system $\Sigma'$ associated with the quadruple $\{H^\half A H^{-\half}, H^{\half}B, C H^{-\half},D\}$ being contractive. Hence $\Si$ is similar to a contractive system.

Conversely, assume $\Si$ is similar to a contractive system $\Si'=\{A',B',C',D'\}$ via a bounded and boundedly invertible operator $\Ga:\cX\to\cX'$, i.e., $A'$, $B'$, $C'$ and $D'$ are given by \eqref{sys-sim}. Set $H=\Ga^*\Ga$ and let $M'$ denote the system matrix of $\Si'$. Then $M'$ being contractive implies
\begin{align*}
0 & \preceq
I-M'^* M'
=\mat{cc}{I&0\\0&I}-\mat{cc}{ \Ga^{-1*}A^*\Ga^* & \Ga^{-1*}C^* \\ B\Ga^* & D^* }
\mat{cc}{\Ga A\Ga^{-1} & \Ga B \\ C\Ga^{-1} & D}\\
 & =\mat{cc}{\!\!\Ga^{-1*}&0 \!\!\\ \!\! 0&I \!\!}\left( \mat{cc}{H&0\\0&I}-\mat{cc}{A^*&B^*\\C^*&D^*}\mat{cc}{H&0\\0&I}\mat{cc}{A&B\\ C&D}
 \right)\mat{cc}{\!\!\Ga^{-1}&0\!\!\\ \!\! 0&I\!\!}.
\end{align*}
Thus the KYP-inequality \eqref{KYP1} holds with $H=\Ga^*\Ga$.

Next we prove item (2). The idea behind the proof is the same as for item (1), but one has to be more careful
when dealing with generalized KYP solutions and pseudo-similarity. First assume there exists a generalized
positive-definite solution $H$ to the spatial KYP inequality, i.e., $H$ is closed, densely defined, injective
positive-definite operator on $\cX$ satisfying \eqref{KYP1b'} and \eqref{KYP1b}.
We now define operators
\[
A':\im H^\half \to \cX,\quad B':\cU\to \cX,\quad
C':\im H^\half\to \cY,\quad D'=D:\cU\to\cY
\]
via
\begin{equation} \label{A'B'C'D'}
\mat{cc}{A' \!&\! B'\\ C' \!&\! D'}\mat{c}{\! H^\half x \! \\\! u\!}
=\mat{cc}{H^\half  \!&\!  0\\ 0 \!&\! I}\mat{cc}{A \!&\! B\\ C \!&\! D}
\mat{c}{x\\u}
\  (x\in\cD(H^\half),\, u\in\cU).
\end{equation}
Note that the right-hand side is well defined for each $x\in\cD(H^\half)$ and $u\in\cU$ because of \eqref{KYP1b'}.
Since $H^\half$ is injective, $H^\half x\oplus u$ in $\im H^\half \oplus\cU$ being equal to the zero vector implies
$x=0$ and $u=0$. This implies that the operators $A'$, $B'$, $C'$ and $D'$ are well defined on their given domains.
Moreover, as a consequence of the Spectral Theorem for unbounded selfadjoint operators (see e.g.\
\cite[Theorem VIII.6]{RS}), one can see that $H$ being selfadjoint and injective implies that  $H$ as well as
$H^\half$ have dense range in $\cX$.  Thus the block operator matrix $\sbm{ A' & B' \\ C' & D'}$  given by
\eqref{A'B'C'D'} has dense domain in  $\cD(H^\half) \oplus \cU$ in $\cX \oplus \cU$.  Furthermore,
by the spatial KYP-inequality \eqref{KYP1b},  we see that this operator \eqref{A'B'C'D'}
acts contractively on  its domain, and hence can be continuously extended to a $2\times 2$ block operator
(of which the entries are also denoted by $A'$, $B'$, $C'$, $D'$) that maps $\cX\oplus \cU$ contractively into
$\cX\oplus \cY$. Hence the operators $\{A',B',C',D'\}$ generate a contractive linear system $\Si'$. We claim
that $\Ga=H^\half$ provides a pseudo-similarity between the systems $\Si$ and $\Si'$. By definition $H^\half$ is
densely defined, and we already observed above that $H^\half$ has dense range. The remaining conditions on
$\Ga=H^\half$ listed in \eqref{pseudosim} follow directly from \eqref{KYP1b'} and the definition of the operators
$A'$, $B'$, $C'$ and $D'$.

It remains to prove the reverse inclusion. Hence, we assume $\Si$ is pseudo-similar to a contractive system $\Si'$
given by the quadruple $\{A',B',C',D'\}$ via the pseudo-similarity $\Ga:\cX\to \cX'$.  Since $\Ga$ is a closed operator,
by \cite[Theorem VIII.32]{RS} it admits a polar decomposition $\Ga=U |\Ga|$ with $|\Ga|$ the positive-semidefinite,
square root $|\Ga|=(\Ga^*\Ga)^\half$ with $\cD(|\Ga|)=\cD(\Ga)$ and $U$ a partial isometry with initial space equal to
 $(\kr \Ga)^\perp=\cX$ and final space $\overline{\im \Ga}=\cX'$, i.e., $U$ is unitary.  As $\Gamma$ is injective,
 in fact $|\Ga|$ is positive-definite.
 Now set $H=\Ga^*\Ga=|\Ga|^2$, so that $H^\half=|\Ga|$.   From the Spectral Theorem applied to  $|\Gamma| = H^\half$,
 one can read off that $H = (H^\half)^2$ is positive-definite selfadjoint with a domain in general smaller than
 $\cD(H^\half)$ but still dense in $\cX$.  Since $\cD(H^\half)=\cD(\Ga)$, the inclusions \eqref{KYP1b'} follow directly
 from \eqref{pseudosim}.  Since $U$ is unitary, for each $x\in\cD(H^\half)=\cD(\Ga)$  and $u\in\cU$ we have
\begin{align*}
& \left\|\mat{cc}{H^\half&0\\0&I}\mat{c}{x\\u}\right\|^2 - \left\|\mat{cc}{H^\half&0\\0&I}
\mat{cc}{A&B\\C&D}\mat{c}{x\\u}\right\|^2=\\
&\quad =  \left\|\mat{cc}{UH^\half&0\\0&I}\mat{c}{x\\u}\right\|^2 - \left\|\mat{cc}{UH^\half&0\\0&I}
\mat{cc}{A&B\\C&D}\mat{c}{x\\u}\right\|^2\\
&\quad =  \left\|\mat{cc}{\Ga&0\\0&I}\mat{c}{x\\u}\right\|^2 - \left\|\mat{cc}{\Ga&0\\0&I}
\mat{cc}{A&B\\C&D}\mat{c}{x\\u}\right\|^2\\
&\quad =  \left\|\mat{c}{\Ga x\\u}\right\|^2 \!\!-\! \left\|\mat{cc}{A'\Ga&B'\\C'\Ga&D'}\mat{c}{x\\u}\right\|^2
\!\!=\!  \left\|\mat{c}{\Ga x\\u}\right\|^2 \!\!-\! \left\|\mat{cc}{A'&B'\\C'&D'}\mat{c}{\Ga x\\u}\right\|^2\!\!.
\end{align*}
The fact that $\Si'$ is a contractive system then shows that \eqref{KYP1b} holds. Hence $H$ is a generalized
positive-definite solution to the spatial KYP inequality associated with $\Si$.
\end{proof}

\begin{proof}[Proof of Theorem \ref{T:BRLinfstan}]
We start with the sufficiency claims of items (2) and (3). In both cases, we assume that the KYP inequality \eqref{KYP1}
has a bounded, strictly positive-definite solution $H$. Then item (1) of Lemma \ref{L:SSSvsKYP} yields that $\Si$ is
similar to a contractive system. The sufficiency claims of items (2) and (3) then follow directly from the sufficiency
claims of items (2) and (3) of Theorem \ref{T:SSS}. Moreover, since $\Si$ is similar to a contractive system, $A$ is similar
 to the state operator of a contractive system, which is a contraction. In particular, the spectral radius of $A$ is at
 most one, so that the transfer function $F_\Si$ is analytic on $\BD$. The sufficiency in item (1) follows in the
 same way, now combining item (2) of Lemma \ref{L:SSSvsKYP} with the sufficiency direction of item (1) of
 Theorem \ref{T:SSS}.

Next we proof the necessity claims of Theorem \ref{T:BRLinfstan}. In all three items we assume the the transfer function $F_\Si$ of $\Si$ has an analytic continuation to a Schur class function in $\cS(\cU,\cY)$. By the contractive realization theorem for Schur class functions
(see e.g.\ \cite[Theorem 5.2]{BC} or \cite[Theorem VI.3.1]{NF} as
well as \cite{AKP97}), there is another discrete-time linear system
$\Si'$ with a contractive system matrix
$$
M' = \begin{bmatrix} A' & B' \\ C' & D  \end{bmatrix}\colon
\begin{bmatrix} \cX' \\ \cU \end{bmatrix} \to \begin{bmatrix} \cX' \\
    \cY \end{bmatrix}
$$
for some Hilbert space $\cX'$ with associated transfer function
$F_{\Si'}$  equal to $F_\Si$ on an open neighborhood of $0$. By
compressing orthogonally to the controllable and observable subspace
in $\cX'$, we may assume that the system $\Si'$ is minimal, i.e., the
input pair $(A',B')$ is controllable and the output pair $(C',A')$ is
observable. The fact that $M'$ is contractive implies that the controllability operator $\bW_c'$ and the
observability operator $\bW_o'$ associated with $\Si'$ are both contractive, by Proposition~\ref{P:contractiveSM}.

It now follows that for $\textup{d}=1,2,3$, assuming the conditions of item (d) in Theorem \ref{T:BRLinfstan},
all hypotheses of item (d) in Theorem \ref{T:SSS} are satisfied, so that we can conclude that $\Si$ and $\Si'$
are pseudo-similar for $\textup{d}=1$  and similar for $\textup{d}=2,3$. The claims then follow from
Lemma \ref{L:SSSvsKYP}, applying item (1) for $\textup{d}=2,3$ and item (2) for $\textup{d}=1$.
\end{proof}

\section{The infinite-dimensional strict bounded real lemma}
\label{S:infstrictBRL}

In this section we prove Theorem \ref{T:BRLinfstrict}.

\begin{proof}[Proof of sufficiency in Theorem
    \ref{T:BRLinfstrict}]
The sufficiency follows simply from the sufficiency in Theorem
\ref{T:BRLinfstan} (2). Indeed, since the KYP-inequality in question
\eqref{KYP2} is strict, one can replace $B$ and $D$ by $\ga B$ and
$\ga D$ for a sufficiently small $\ga>1$ without violating the strict
inequality. Evoking the sufficiency claim of Theorem
\ref{T:BRLinfstan} (2) tells us that $\ga F_\Si$ is a Schur class
function, so that $\| F_\Si\|_\infty\leq 1/\ga <1$. Hence
$F_\Si\in\cS^{o}(\cU, \cY)$.
\end{proof}

Our proof of the necessity also relies on Theorem \ref{T:BRLinfstan},
but is more involved. We follow the ideas from the proof for the
finite-dimensional case from Petersen-Anderson-Jonkheere \cite{PAJ}.

\begin{proof}[Proof of  necessity in Theorem
\ref{T:BRLinfstrict}]

Let $\Si$ be a discrete-time linear system as in \eqref{dtsystem}
with system matrix $M$ as in \eqref{sysmat} and transfer function
$F_\Si$ defined by \eqref{trans}. Assume that (i) $\spec (A) <1$, and
(ii) $F_{\Sigma}$ is in the strict Schur class $\cS^{o}(\cU, \cY)$.

Since $\spec (A) < 1$, we have that the resolvent expression $(I - zA)^{-1}$ is
uniformly bounded in norm with respect to  $z$ in the unit disk ${\mathbb D}$.  It
follows that we can choose $\epsilon >0$ sufficiently small so that the
augmented matrix function
\begin{equation}   \label{Fepsilon}
F_{\epsilon}(z) : = \begin{bmatrix} F(z) & \epsilon z C (I -
    zA)^{-1} \\ \epsilon z (I - zA)^{-1} B & \epsilon^{2} z (I -
    zA)^{-1} \\ \epsilon I_{\cU} & 0 \end{bmatrix}
\end{equation}
is in the strict Schur class $\cS^{o}( \cU \oplus \cX, \cY \oplus \cX
\oplus \cU)$. Note that
\[
 F_{\epsilon}(z) = \begin{bmatrix} D & 0 \\ 0 & 0 \\ \epsilon I_{\cU}
 & 0 \end{bmatrix} + z \begin{bmatrix}  C \\ \epsilon I_{\cX} \\ 0
 \end{bmatrix} (I - zA)^{-1} \begin{bmatrix} B & \epsilon I_{\cX}
 \end{bmatrix}
\]
and hence
\begin{equation}   \label{breal}
 M_{\epsilon} =  \begin{bmatrix}  \bA & \bB \\ \bC & \bD
\end{bmatrix} : =
  \mat{c|cc}{
    A &  B & \epsilon I_{\cX}\\
    \hline C &  D & 0 \\
    \epsilon I_{\cX} & 0 & 0 \\
    0  & \epsilon I_{\cU} & 0}
\end{equation}
is a realization for $ F_{\epsilon}(z)$ with associated linear system which we denote by $\Sigma_\epsilon$.
Note that $\bB$ is already
onto the state space $\cX$ and $\bC^{*}$ is also onto $\cX$, so the
system $\Sigma_\epsilon$  is exactly controllable and exactly
observable, i.e., exactly minimal.  As $A$ is exponentially stable, it is also the case that $\Sigma_\epsilon$ is
$\ell^2$-exactly minimal.  We may therefore apply either of items (2) or (3) in Theorem \ref{T:BRLinfstan}
to conclude that there is a bounded strictly positive-definite operator $H$ on the state space $\cX$ so
that
$$
   \begin{bmatrix} \bA^{*} & \bC^{*} \\ \bB^{*} & \bD^{*}
\end{bmatrix}
       \begin{bmatrix} H & 0 \\ 0 & I_{\cY \oplus \cX \oplus \cU}
       \end{bmatrix} \begin{bmatrix} \bA & \bB \\ \bC & \bD
   \end{bmatrix} \preceq \begin{bmatrix} H & 0 \\ 0 & I_{\cU \oplus
   \cX} \end{bmatrix}.
$$
Spelling this out gives
$$
\begin{bmatrix} A^{*}HA + C^{*}C + \epsilon^{2}I_{\cX} & A^{*}H B +
    C^{*}D & \epsilon A^{*}H \\
 B^{*}HA + D^{*}C & B^{*} H B + D^{*} D + \epsilon^{2}I_{\cU} &
 \epsilon B^{*} H \\ \epsilon HA & \epsilon HB & \epsilon^{2} H
\end{bmatrix} \preceq \begin{bmatrix} H & 0 & 0 \\ 0 & I_{\cU} & 0 \\
0 & 0 & I_{\cX} \end{bmatrix}.
$$
By crossing off the third row and third column, we get the inequality
$$
\begin{bmatrix} A^{*} H A + C^{*} C + \epsilon^{2} I_{\cX} & A^{*}H B
    + C^{*} D \\ B^{*}H A + D^{*} C & B^{*} H B + D^{*} D +
    \epsilon^{2} I_{\cU} \end{bmatrix} \preceq \begin{bmatrix} H & 0
    \\ 0 & I_{\cU} \end{bmatrix}
$$
or
$$
\begin{bmatrix} A^{*} & C^{*} \\ B^{*} & D^{*} \end{bmatrix}
    \begin{bmatrix} H & 0 \\ 0 & I_{\cY} \end{bmatrix}
	\begin{bmatrix} A & B \\ C & D \end{bmatrix} + \epsilon^{2}
	    \begin{bmatrix} I_{\cX} & 0 \\ 0 & I_{\cU} \end{bmatrix}
		\preceq \begin{bmatrix} H & 0 \\ 0 & I_{\cU}
	    \end{bmatrix}
$$
leading us to the strict KYP-inequality \eqref{KYP2} as wanted.
 \end{proof}

\paragraph{\bf acknowledgements}
This work is based on the research supported in part by the National
Research Foundation of South Africa (Grant Numbers 93039, 90670, and
93406). Any opinion, finding and conclusion or
recommendation expressed in this material is that of the authors and
the NRF does not accept any liability in this regard.

It is a pleasure to thank Mikael Kurula for his useful comments leading to a number of
improvements in the exposition while visiting the first author in July 2017.


\begin{thebibliography}{99}

\bibitem{AV}
\textsc{B.D.O.\ Anderson, S.\ Vongpanitlerd}, \textit{Network
Analysis and Synthesis:  A Modern Systems Theory Approach},
Prentice-Hall, Englewood Cliffs, 1973.

\bibitem{Arov79a}
\textsc{D.Z.\ Arov}, Scattering theory with
dissipation of
energy,  \textit{Dokl.\ Akad.\ Nauk SSSR}, \textbf{216}(1974),
713--716
[Russian]; English (with addenda): \textit{Sov.\ Math.\ Dokl.},
\textbf{15}(1974), 149--162.

%
%

\bibitem{AKP97}
\textsc{D.Z. Arov, M.A. Kaashoek, D.R. Pik}, Minimal
and optimal linear discrete time-invariant dissipative scattering
systems, \textit{Integr.\ Equ.\ Oper.\ Theory}, \textbf{29}(1997),
127--154.

\bibitem{AKP05}
\textsc{D.Z. Arov, M.A. Kaashoek, D.R. Pik}, Minimal representations
of a contractive operator as a
product of two bounded operators, \textit{Acta Sci.\ Math.\ (Szeged)},
\textbf{71}(2005), 313--336.

\bibitem{AKP06}
\textsc{D.Z. Arov, M.A. Kaashoek, D.R. Pik}, The
Kalman-Yakubovich-Popov
inequality for discrete time
systems of infinite dimension,  \textit{J.\ Operator Theory}, {\bf
55}(2006),
393--438.

\bibitem{AKP16}
\textsc{D.Z. Arov, M.A. Kaashoek, D.R. Pik}, Generalized solutions of
Riccati
equalities and inequalities,  \textit{Methods Funct.\ Anal.\
Topology}, {\bf
22}(2016), 95--116.


\bibitem{AS}
\textsc{D.Z.\ Arov, O.J.\ Staffans}, The
infinite-dimensional continuous time Kalman-Yakubovich-Popov
inequality,  in:\   \textit{The Extended Field of Operator Theory},
pp.\
37--72,  Oper.\ Theory Adv.\ Appl.\   {\bf 171}, Birkh\"auser, Basel,
2007.

\bibitem{BC}
\textsc{J.A.\ Ball, N.\ Cohen}, De Branges-Rovnyak
operator models and systems theory: a survey, in:\  \textit{Topics in
Matrix and
Operator Theory}, pp.\ 93--136,  Oper.\ Theory
Adv.\ Appl.\  \textbf{50}, Birkh\"auser, Basel, 1991.


\bibitem{KYP2}
\textsc{J.A.\ Ball, G.\ Groenewald, S.\ ter Horst},
Standard versus strict bounded real lemma with infinite-dimensional
state space II: The Storage Function approach,  Oper.\ Theory
Adv.\ Appl., accepted.


\bibitem{BAGK95}
\textsc{A. Ben-Artzi, I. Gohberg, M.A. Kaashoek}, Discrete
nonstationary
bounded real lemma in
indefinite metrics, the strict contractive case, in:
\textit{Operator Theory
and Boundary Eigenvalue Problems (Vienna, 1993)}, pp.\ 49--78,  Oper.\
Theory
Adv.\ Appl.\  \textbf{80}, Birkh\"auser, Basel, 1995.


\bibitem{CO16}
\textsc{A.N. Chakhchoukh, M.R. Opmeer}, The state space isomorphism
theorem for discrete-time
infinite-dimensional systems,  \textit{Integr.\ Equ.\ Oper.\ Theory},
{\bf 84}(2016),
105--120.


\bibitem{DL}
\textsc{G.E.\ Dullerud, S.\ Lall}, A new approach for
analysis and synthesis of time-varying systems,  \textit{IEEE Trans.\
Automat.\
Control}, \textbf{44}(1999), 1486--1497.


%

\bibitem{Helton74}
\textsc{J.W.\ Helton}, Discrete time systems,
operator models, and scattering theory,  \textit{J.\ Functional
Analysis},
\textbf{16}(1974), 15--38.

\bibitem{HZ}
\textsc{J.W.\ Helton, A.H.\ Zemanian}, The cascade loading of passive Hilbert ports,
\textit{SIAM J.\ Appl.\ Math.}
\textbf{23} (1972), 292--306.


\bibitem{NF}
\textsc{B.\ Sz.-Nagy, C.\ Foias, H. Bercovici, L. K\'erchy},
\textit{Harmonic Analysis of Operators on Hilbert Space.
Revised and enlarged edition}, Universitext, Springer, New York, 2010.

\bibitem{OpmeerStaffans2008}
\textsc{M.R.\ Opmeer, O.J.\ Staffans},
Optimal state feedback input-output stabilization of
infinite-dimensional discrete time-invariant linear systems,
\textit{Compl.\
Anal.\ Oper.\ Theory}, \textbf{2} (2008), 479--510.


\bibitem{PAJ}
\textsc{I.R.\ Petersen, B.D.O.\ Anderson, E.A.\ Jonckheere},
A first principles solution to the non-singular $H^{\infty}$
control problem,  \textit{Internat.\ J.\ Robust Nonlinear Control},
\textbf{1}(1991),
171--185.

\bibitem{Rantzer}
\textsc{A.\ Rantzer}, On the Kalman-Yakubovich-Popov
lemma, \textit{ Systems \& Control Letters}, \textbf{28}(1996), 7--10.

\bibitem{RS}
\textsc{M.\ Reed, B.\ Simon}, \textit{Methods of Mathematical
Physics I: Functional Analysis} Academic Press, San Diego, 1980.



\bibitem{Wil72a}
\textsc{J.C.\ Willems}, Dissipative dynamical systems
Part I:  General theory,  \textit{Arch.\ Rational Mech.\ Anal.},
\textbf{45}(1972), 321--351.

\bibitem{Wil72b}
\textsc{J.C.\ Willems}, Dissipative dynamical systems
Part II:  Linear systems with quadratic supply rates,  \textit{Arch.\
Rational Mech.\ Anal.}, \textbf{45}(1972), 352--393.

\bibitem{Yak74}
\textsc{V.A.\ Yakubovich}, The frequency theorem for the case in
which the state space and the control space are Hilbert spaces, and
its application in certain problems in the synthesis of optimal
control. I,  \textit{Sibirsk.\ Mat. \v{Z}.}, \textbf{15}(1974),
639--668 [Russian];
translation in:\  \textit{Sib.\ Math.\ J.}, \textbf{15}(1974),
457--476 (1975).

\bibitem{Yak75}
\textsc{V.A.\ Yakubovich},
The frequency theorem for the case in which the state space and the
control space are Hilbert spaces, and its application in certain
problems in the synthesis of optimal control. II,  \textit{Sibirsk.\
Mat. \v{Z}.}, \textbf{16}(1975), 1081--1102 [Russian]; translation
in:\  \textit{Sib.\ Math.\ J.} \textbf{16} (1974), 828--845 (1976).

\end{thebibliography}
\end{document}